\documentclass{article}
\usepackage{amssymb}
\usepackage{amsmath}

\setcounter{MaxMatrixCols}{10}

\newtheorem{theorem}{Theorem}

\newtheorem{corollary}[theorem]{Corollary}

\newtheorem{remark}[theorem]{Remark}

\newenvironment{proof}[1][Proof]{\noindent\textbf{#1.} }{\ \rule{0.5em}{0.5em}}
\input{tcilatex}
\begin{document}

\begin{center}
\bigskip

{\Large \textsc{Two Distinct Seasonally Fractionally Differenced Periodic
Processes}} \bigskip

Ahmed BENSALMA $^{1,2}$

\textit{$^{\emph{1}}$ }ENSSEA P\^{o}le universitaire Kol\'{e}a, Tipaza (ex:
INPS 11 Doudou mokhtar Benaknoun) Algeria)

\textit{$^{\emph{2}}$}Faculty of Mathematics, University of Science and
Technology Houari Boumediene, Algeria
\end{center}

\abstract{This article is devoted to study the effects of the S-periodical fractional differencing filter $(1-L^{S})^{D_{t}}$. To put this effect in evidence, we have derived the periodic auto-covariance functions of two distinct univariate seasonally fractionally differenced periodic models. A multivariate representation of periodically correlated process is exploited to provide the exact and approximated expression auto-covariance of each models. The distinction between the models is clearly obvious through the expression of periodic auto-covariance function. Besides producing different auto-covariance functions, the two \ models differ in their implications. In the first model, the seasons of the multivariate series are separately fractionally integrated. In the second model, however, the seasons for the univariate series are fractionally co-integrated. On the simulated sample, for each models, with the same parameters, the empirical periodic
auto-covariance are calculated and graphically represented for illustrating the results and support the comparison between the two models.}

\section{Introduction}

Since their introduction by Gladyshev $(1961,1963)$ much attention has been
given to periodically correlated processes. The interest, for such processes
is due to their potential use in modeling of cyclical phenomena appearing in
hydrology, climatology and in econometrics. Following pioneer work of
Gladyshev $(1963)$, an important part of the literature has been devoted to
the periodically correlated discrete time processes. A discrete time process
is periodically correlated, if there is a non zero integer $S$ such that%
\begin{equation*}
E\left( X_{t+S}\right) =E\left( X_{t}\right) \text{ and }%
Cov(X_{t_{1}+S},X_{t_{2}+S})=Cov(X_{t_{1}},X_{t_{2}}).
\end{equation*}

A review of the periodically correlated discrete time processes is proposed
in Lund and Basawa $(1999)$, Bentarzi and Hallin $(1994)$ give invertibility
conditions for periodic moving average.. A large part of the literature on
the subject is devoted to the periodic $ARMA$ ($PARMA$) models, which have
the following representation:%
\begin{equation*}
X_{t}-\underset{i=0}{\overset{p_{t}}{\sum }}\phi _{i,t}X_{t-i}=\underset{j=0}%
{\overset{q_{t}}{\sum }}\theta _{j,t}u_{t-j}\text{, \ }t=0,\pm 1,\pm 2,....,
\end{equation*}%
where $u_{t}$ is a zero-mean white noise with variance $\sigma _{t}^{2}$.
Among searchers who were interested with the periodic autoregressifs
processes not periodically stationary, we cite Boswijk and Franses $(1995)$
which studied the problem of the presence of a unit root in a periodic
autoregression model of order $p$ $(PAR(p))$ and Boswijk, Franses and
Haldrup $(1997)$ which studied the presence of multiple unit roots in a
periodic autoregression model of order $p$. All work cited above were made
under the assumption that the processes are periodically integrated of order
zero $(PI(0))$, integrated of order one $(I(1))$ or periodically integrated
of order one $((PI(1)$). However currently, it well-known that in the
scientific fields mentioned above (hydrology, meteorology, econometrics)
much of sets of data that have a certain periodicity; have also a long range
dependence (or long memory). Such phenomena can be modeled by stationary
processes. The stationary processes with seasonal long memory are well know
(see for example Gray, Zhang and Woodward $(1989)$: Garma models;
Purter-Hudak $(1990)$:Seasonal $ARFIMA$; Oppenheim, G. and al $(2000)$; ould
Haye and al $(2003)$ for references, properties and simulations). Another
alternative, to take account of certain periodic phenomena with long memory
is to consider nonstationary models (but periodically stationary) such as
the periodically correlated processes with long memory. The periodically
correlated processes, within the meaning of Gladyshev $(1963)$, with long
memory did not receive much attention on behalf of the statisticians and the
probabilists. Among works associating periodicity within the meaning of
Gladyshev $(1963)$ and the presence of long memory we cite, Hui and Li $%
(1995)$, Franses and Ooms $(1997)$, Ooms and Franses $(2001).$

For modelling of the Hong Kong United Christian Hospital attendance series,
Hui and Li $(1995)$ propose a $2$-periodic correlated process,%
\begin{equation}
(1-L)^{d_{t}}Y_{t}=u_{t},  \tag{$1.1$}
\end{equation}%
where $\left\{ u_{t},t\in 
\mathbb{Z}
\right\} $ is a zero mean white noise with variance $\sigma _{t}^{2}$, and $%
d_{t}$ the $2$-periodic fractional parameter. The empirical series $y_{t}$
concerns seventy five (approximately one and half years) data on the average
number of people entering the emergency unit on weekday and weekend.

On the other hand, in order to analyzes the long-memory properties in the
conditional mean of the quarterly inflation rate in the United Kingdom
Franses and Ooms $(1997)$ propose a $4$-periodic correlated process,%
\begin{equation}
Y_{t}=(1-L)^{-d_{t}}u_{t},  \tag{$1.2$}
\end{equation}%
where $\left\{ u_{t},t\in 
\mathbb{Z}
\right\} $ is defined as above and $d_{t}$ is the $4$-periodic fractional
parameter.

Finally, for the monthly empirical data, which concern the log transformed
data of the monthly mean river flow in cubic feet per second, Ooms and
Franses $(2001)$ propose to use the seasonal periodic fractional operator
defined, in simple framework as follows,%
\begin{equation}
Y_{t}=(1-L^{S})^{-D_{t}}u_{t},\text{ }S=12  \tag{$1.3$}
\end{equation}%
where $\left\{ u_{t},t\in 
\mathbb{Z}
\right\} $ is defined as above and $S=12$.

The main difference between, the one hand, the models $(1.1)$ and $(1.2)$
and the other hand, the model $(1.3)$, is in the unit of lag to which the
fractional difference operator is applied. In the models $(1.1)$ and $(1.2)$
the fractional difference operator was applied to weekly and quarterly lags,
respectively, corresponding to the basic time interval of the time series
analyzed. In the model $(1.3)$ the fractional difference operator was
applied to yearly, which is the seasonal lag of the time series analyzed.
Indeed, by using a binomial expansion for the difference operator $%
(1-L)^{d_{t}}$, $(1-L)^{-d_{t}}$, $(1-L^{S})^{-d_{t}}$ we can rewrite,
respectively, the models $(1.1)$, $(1.2)$ and $(1.3)$ as the following,%
\begin{equation}
\sum_{j=0}^{\infty }\frac{\Gamma \left( j-d_{t}\right) }{\Gamma \left(
j+1\right) \Gamma \left( -d_{t}\right) }Y_{t-j}=u_{t},  \tag{$1.4$}
\end{equation}%
\begin{equation}
Y_{t}=\sum_{j=0}^{\infty }\frac{\Gamma \left( j+d_{t}\right) }{\Gamma \left(
j+1\right) \Gamma \left( d_{t}\right) }u_{t-j},  \tag{$1.5$}
\end{equation}%
\begin{equation}
Y_{t}=\sum_{j=0}^{\infty }\frac{\Gamma \left( j+D_{t}\right) }{\Gamma \left(
j+1\right) \Gamma \left( D_{t}\right) }u_{t-Sj}.  \tag{$1.6$}
\end{equation}

where%
\begin{equation*}
\Gamma \left( z\right) =\left\{ 
\begin{array}{ll}
\int_{0}^{+\infty }s^{z-1}e^{-z}ds, & \text{if }z>0 \\ 
\infty & \text{if }z=0,%
\end{array}%
\right.
\end{equation*}%
if $z<0$, $\Gamma \left( z\right) $ is defined in terms of the above
expressions and the recurrence formula $z\Gamma \left( z\right) =\Gamma
\left( z+1\right) $.

While, the invertibility and stationarity conditions of the model $(1.3)$
are known (see Ooms and Franses $2001$), apart when $d_{t}=d$ is a constant,
nothing is clear about the models $(1.1)$ and $(1.2)$. More precisely, no
thing is clear about the stationarity conditions for the model $(1.1)$,
because his infinite moving average representation is unknown and no thing
is clear about the invertibility conditions for the model $(1.2)$, because
his infinite autoregressive representation is unknown. The model $(1.3)$ is
invertible and stationary if $-0.5<D_{t}<0.5$ and it is easy to show in this
case that the infinite autoregressive representation of the process $y_{t}$
is given by%
\begin{equation*}
\sum_{j=0}^{\infty }\frac{\Gamma \left( j-D_{t}\right) }{\Gamma \left(
j+1\right) \Gamma \left( -D_{t}\right) }Y_{t-Sj}=u_{t}.
\end{equation*}

For the model $(1.4)$, at any case, in general, we have,%
\begin{equation*}
Y_{t}\neq \sum_{j=0}^{\infty }\frac{\Gamma \left( j+d_{t}\right) }{\Gamma
\left( j+1\right) \Gamma \left( d_{t}\right) }u_{t-j},
\end{equation*}%
and for the model $(1.5)$, at any case, in general, we have,%
\begin{equation*}
\sum_{j=0}^{\infty }\frac{\Gamma \left( j-d_{t}\right) }{\Gamma \left(
j+1\right) \Gamma \left( -d_{t}\right) }Y_{t-j}\neq u_{t}.
\end{equation*}

For the particular periodic $ARFIMA(0,d_{t},0)$, namely $%
(1-L)^{d_{t}}y_{t}=u_{t},$ $u_{t}\sim i.i.d(0,\sigma _{t}^{2})$, the
infinite moving average representation is unknown. In this paper, we give
the closed form of this representation. It is important to known such
representation in order to deduce the stationarity condition of this type of
model. Unfortunately, the closed form obtained is not easy to handle due to
her parametric complexity (see Appendix).

Since the $PARFIMA(p,d_{t},q)$ is not easy to handle. The work that we
present in this article is concerned only on the Seasonal periodical
fractional operator, namely $(1-L^{S})^{D_{t}}$. More Precisely, in this
work we are interested in certain theoretical properties of the $%
SPARFIMA(p,0,0)(0,D_{t},0)_{S}$ (Seasonal periodic $ARFIMA$). The study of
the theoretical properties of this class of models remains to be made;
because among works which evoke this class, only one exists; that is of
Franses and Ooms $(2001)$. The work of Franses and Ooms has to consist in
adjusting a $SPARFIMA(p,0,0)(0,D_{t},0)_{S}$ to a set of real data.
Precisely the model considered by Ooms and Franses is defined as follows:

\begin{equation*}
\phi _{t}(L)\left( X_{t}-\mu _{t}\right) =\eta _{t},\text{ }t\in 
\mathbb{N}
^{\ast }\text{, with }\eta _{t}=(1-L^{S})^{-D_{t}}u_{t},
\end{equation*}%
where $\mu _{t}$ is S-periodical constant such as $\mu _{t}=\mu _{t+S}$, $%
\phi _{t}(L)=1-\phi _{t,1}L-\phi _{t,2}L^{2}-...-\phi _{t,p}L^{p}$. The
parameters $\phi _{t,i}$ $i=1,...,p$ are periodic functions in $t,$ and $%
\eta _{t}$ a white noise seasonally fractionally integrated of order $D_{t}$%
, where $D_{t}$ is S-periodical fractional parameter. The model above, if $%
0\leq D_{t}<0.5$, $\forall t$ can be written as follows:

\begin{equation}
(1-L^{S})^{D_{t}}\phi _{t}(L)\left( X_{t}-\mu _{t}\right) =u_{t},\text{ }%
t\in 
\mathbb{Z}
.  \tag{$Model(I)$}
\end{equation}

There is another class of models $SPARFIMA(p,0,0)(0,D_{t},0)_{S}$ distinct
from that used by Franses and Ooms $(2001)$; this class is defined as
follows:%
\begin{equation}
\phi _{t}(L)(1-L^{S})^{D_{t}}\left( X_{t}-\mu _{t}\right) =u_{t},\text{ }%
t\in 
\mathbb{Z}
,  \tag{$Model(II)$}
\end{equation}%
where $\mu _{t}$, $\phi _{t}(L)$, $D_{t}$ are defined like above. These two
classes coincide, only if $D_{t}=D,$ $\forall t$, since, generally, the
composition of $\phi _{t}(L)$ and $(1-L^{S})^{D_{t}}$ is not necessarily
commutative. To convince, it is sufficient to notice that the $S$-variate
representation of the model $(I)$ is a $VARFI$ model (vector autoregressive
model, driven by fractionally integrated innovation) whereas the
multivariate writing of the model $(II)$ is a $FIVAR$ model (fractionally
integrated vector autoregression) (see Rebecca Sela and Clifford Mr. Hurvich 
$(2008)$). These two distinct classes, generalize the univariate model $%
ARFIMA$, the first is closely related to the cointegrated processes, whereas
the second is closely related to the integrated processes. Consequently, in
our case, the model $(I)$ is closely related to the cointegrated season and
the model $(II)$ is closely related to the integrated season.

In order to distinguish between the model $(I)$ and $(II)$, we note them,
respectively as the following: $PAR(p)-PSFI(D_{t})$ and $PSFI(D_{t})-PAR(p)$%
. The rest of this paper is organized as follows: section $2$ is devoted to
defined two class of processes; the periodic autoregressive of order $p$
process with periodic seasonal fractional integrated of order $D_{t}$
innovation, namely $PAR(p)-PSFI(D_{t})$ and the periodic seasonal fractional
integrated process, periodic autoregressive of order $p$, namely $%
PSFI(D_{t})-PAR(p)$. In section $3$, for each model defined in section $2$,
we provide the exact \ and approximated expression of the periodic
autocovariances function. In the section $4$, on the simulated samples for
each model, with the same parameters for the model $(I)$ and $(II)$, the
empirical periodic autocovariances are calculated and graphically
represented for illustrating the theoretical results and comparison between
the two models.

Without restricting the generality, we suppose that all processes defined
below have zero mean.

\section{Representation and notation}

\subsection{S-periodical seasonally fractionally integrated, periodic
autoregressive process ($PSFI(D_{t})-PAR(p)$)}

A periodically correlated process $\left\{ Y_{t},t\in 
\mathbb{Z}
\right\} $ is said S-periodical seasonally fractionally integrated of order $%
D_{t}$, periodic autoregressive of order $p$; if it has the following
representation:%
\begin{equation}
\Phi _{t}(L)(1-L^{S})^{D_{t}}Y_{t}=u_{t},\text{ \ \ \ }t\in 
\mathbb{Z}
,  \tag{$2.1$}
\end{equation}%
where $\left\{ u_{t},t\in 
\mathbb{Z}
\right\} $ is a zero mean white noise with variance $\sigma _{t}^{2}$and $%
(1-L^{S})^{D_{t}}$ are defined like above. $\phi _{t}(L)=1-\phi _{t,1}L-\phi
_{t,2}L^{2}-...-\phi _{t,p}L^{p}$ where $\phi _{t,1},...,\phi _{t,p}$ are $S$%
-periodical parameters.\newline
Letting $\underline{\mathbf{Y}}_{\tau }=\left( \mathbf{Y}_{1,\tau },...,%
\mathbf{Y}_{s,\tau },...,\mathbf{Y}_{S,\tau }\right) ^{\prime }$ and $%
\underline{\mathbf{u}}_{\tau }=\left( \mathbf{u}_{1,\tau },...,\mathbf{u}%
_{s,\tau },...,\mathbf{u}_{S,\tau }\right) ^{\prime }$ with $\mathbf{Y}%
_{s,\tau }=Y_{s+S\tau }$ and $\mathbf{u}_{s,\tau }=u_{s+S\tau }$ then the
process $(2.1)$ can be rewritten in the $S$ variate form%
\begin{equation}
\mathbf{\Phi }_{0}\nabla _{S}^{\underline{\mathbf{D}}}(L)\underline{\mathbf{Y%
}}_{\tau }-\underset{i=1}{\overset{\mathbf{P}}{\sum }}\mathbf{\Phi }%
_{i}\nabla _{S}^{\underline{\mathbf{D}}}(L)\underline{\mathbf{Y}}_{\tau -i}=%
\underline{\mathbf{u}}_{\tau },  \tag{$2.2$}
\end{equation}%
where $\mathbf{P}=\left[ \frac{p+1}{S}\right] +1$, with $\left[ x\right] $
denotes the smallest integer than or equal to $x$, $\nabla _{S}^{\underline{%
\mathbf{D}}}(L)$ is defined like above. The autoregressive coefficient
matrices are given by%
\begin{equation*}
\left( \mathbf{\Phi }_{0}\right) _{s,j}=\left\{ 
\begin{array}{c}
1\text{ \ \ \ \ \ \ }s=j, \\ 
0\text{ \ \ \ \ \ \ }s<j, \\ 
-\phi _{s-j,j}\text{ \ \ \ \ \ }s>j,%
\end{array}%
\right.
\end{equation*}%
and%
\begin{equation*}
\left( \mathbf{\Phi }_{i}\right) _{s,j}=\phi _{iS+s-j,s},\text{ \ }%
s,j=1,...,S\text{ and }1\leq i\leq \mathbf{P}.
\end{equation*}

The periodic stationarity condition of the model $(2.2)$ is the same as the
stationarity condition of it equivalent fractional integrated vector
autoregression, namely $FIVAR$, (Rebecca Sela and Clifford Hurvich $(2008)$)
representation $(2.2)$, which means that the roots of the determinantal
equation%
\begin{equation*}
\det \left( I_{S}z^{\mathbf{P}}-\underset{i=1}{\overset{\mathbf{P}}{\sum }}%
\mathbf{\Phi }_{0}^{-1}\mathbf{\Phi }_{i}z^{\mathbf{P}-i}\right) =0,
\end{equation*}%
are less than $1$ in absolute value (Hannan $(1970)$, Fuller $(1976)$) and%
\begin{equation*}
0\leq D_{s}<0.5,\text{for all }s=1,...,S,
\end{equation*}%
(Hosking $(1981)$). If the process $(2.2)$ is stationary, then it has an
infinite moving average representation given by%
\begin{align}
\underline{\mathbf{Y}}_{\tau }& =\nabla _{S}^{\underline{\mathbf{D}}}(L)^{-1}%
\mathbf{\Phi }\left( L\right) ^{-1}\underline{\mathbf{u}}_{\tau },  \notag \\
& =\left( \underset{j=0}{\overset{\infty }{\tsum }}\Psi _{j}L^{j}\right)
\left( \underset{j=0}{\overset{\infty }{\tsum }}\Pi _{j}L^{j}\right) 
\underline{\mathbf{u}}_{\tau },  \notag \\
& =\underset{j=0}{\overset{\infty }{\tsum }}\left( \underset{k=0}{\overset{j}%
{\tsum }}\Psi _{k}\Pi _{j-k}\right) \underline{\mathbf{u}}_{\tau -j},  \notag
\\
& =\underset{j=0}{\overset{\infty }{\tsum }}C_{j}\underline{\mathbf{u}}%
_{\tau -j},  \tag{$2.3$}
\end{align}%
where $\nabla _{S}^{\underline{\mathbf{D}}}(L)=diag\left(
(1-L)^{D_{1}},\cdots ,(1-L)^{D_{s}},\cdots ,(1-L)^{D_{S}}\right) $, $\mathbf{%
\Phi }\left( L\right) =\mathbf{\Phi }_{0}-\mathbf{\Phi }_{1}L-...-\mathbf{%
\Phi }_{P}L^{P}$ and $\left[ \mathbf{\Phi }\left( L\right) \right] ^{-1}=%
\mathbf{\Pi }(L)=\underset{j=0}{\overset{\infty }{\tsum }}\mathbf{\Pi }%
_{j}L^{j}$, with $\left( \mathbf{\Pi }_{j}\right) _{j\in 
\mathbb{N}
}$ is sequence of absolutely summable matrix i.e. $\underset{j=0}{\overset{%
\infty }{\tsum }}\left\vert \mathbf{\Pi }_{j}(l,k)\right\vert <\infty $, $%
\forall l\in \left\{ 1,...,S\right\} $ and $\forall k\in \left\{
1,...,S\right\} $. $C_{j}=\underset{k=0}{\overset{j}{\tsum }}\mathbf{\Psi }%
_{k}\mathbf{\Pi }_{j-k}$ with $\mathbf{\Psi }_{j}$ defined like above. The $%
ith$ element of $\underline{\mathbf{Y}}_{\tau }$, $\mathbf{Y}_{i,\tau }$ is
written as follows%
\begin{equation}
\mathbf{Y}_{i,\tau }=(1-L^{S})^{D_{i}}\left( \mathbf{\Phi }\left( L\right)
^{-1}\right) _{i}\underline{\mathbf{u}}_{\tau }  \tag{$2.4$}
\end{equation}%
where $\left( \mathbf{\Phi }\left( L\right) ^{-1}\right) _{i}$ is the $ith$
rows of $\mathbf{\Phi }\left( L\right) ^{-1}$. From $(2.4)$ we see clearly
that,%
\begin{equation*}
\mathbf{Y}_{i,\tau }\text{ is integrated of order }D_{i},i=1,...,S.
\end{equation*}

\subsection{Periodic autoregressive, S-periodical seasonally fractionally
integrated process ($PAR(p)-PSFI(D_{t})$)}

A periodically correlated process $\left\{ Z_{t},t\in 
\mathbb{Z}
\right\} $ is said, periodic autoregressive of order $p$; $S$-periodical
seasonally fractionally integrated of order $D_{t}$ if it has the following
representation:%
\begin{equation}
\Phi _{t}(L)Z_{t}=(1-L^{S})^{-D_{t}}u_{t},\text{ \ \ \ }t\in 
\mathbb{Z}
,  \tag{$2.5$}
\end{equation}%
where $\left\{ u_{t},t\in 
\mathbb{Z}
\right\} $, $(1-L^{S})^{D_{t}}$ and $\phi _{t}(L)$ are defined like above.
Letting $\underline{\mathbf{Z}}_{\tau }=\left( \mathbf{Z}_{1,\tau },...,%
\mathbf{Z}_{s,\tau },...,\mathbf{Z}_{S,\tau }\right) ^{\prime }$ and $%
\underline{\mathbf{u}}_{\tau }=\left( \mathbf{u}_{1,\tau },...,\mathbf{u}%
_{s,\tau },...,\mathbf{u}_{S,\tau }\right) ^{\prime }$ with $\mathbf{Z}%
_{s,\tau }=Z_{s+S\tau }$ and $\mathbf{u}_{s,\tau }=u_{s+S\tau }$ then the
process $(2.5)$ can be rewritten in the $S$ variate form%
\begin{equation}
\mathbf{\Phi }_{0}\underline{\mathbf{Z}}_{\tau }-\underset{i=1}{\overset{%
\mathbf{P}}{\sum }}\mathbf{\Phi }_{i}\underline{\mathbf{Z}}_{\tau -i}=\nabla
_{S}^{-\underline{\mathbf{D}}}(L)\underline{\mathbf{u}}_{\tau },  \tag{$2.6$}
\end{equation}%
where $\mathbf{P}=\left[ \frac{p+1}{S}\right] +1$, with $\left[ x\right] $
denotes the smallest integer than or equal to $x$, $\nabla _{S}^{-\underline{%
\mathbf{D}}}(L)$, $\mathbf{\Phi }_{0}$ and $\mathbf{\Phi }_{i}$, $i=1,...,%
\mathbf{P}$ are defined like above. The model $(2.6)$ is vector
autoregression with fractional integrated innovation, namely $VARFI$
(Rebecca Sela and Clifford Hurvich $(2008)$). The periodic stationarity
condition of the model $(2.6)$ is the same than the model $(2.2)$. The $ith$
relation of $(2.6$) is written%
\begin{equation*}
\left( \mathbf{\Phi }(L)\right) _{i}\underline{\mathbf{Z}}_{\tau
}=(1-L^{S})^{-D_{i}}\mathbf{u}_{i,\tau }
\end{equation*}%
where $\mathbf{\Phi }\left( L\right) =\mathbf{\Phi }_{0}-\mathbf{\Phi }%
_{1}L-...-\mathbf{\Phi }_{\mathbf{P}}L^{\mathbf{P}}$ and $\left( \mathbf{%
\Phi }(L)\right) _{i}$ is the $ith$ rows of $\mathbf{\Phi }\left( L\right) $%
, this means that the $ith$ relation of $(2.6)$ is integrated of order $%
D_{i} $. Among the $S$ relations of $(2.6)$, those which are integrated of
order lower than $\underset{1\leq i\leq S}{max}D_{i}$ are relations of
cointegration. If all the values $D_{i}$ are different, then we can say that
there are ($S-1$) relations of cointegrations. If $D_{1}=....=D_{S}$ it does
not exist any relation of cointegration. Generally, when we have $%
D_{1}<D_{2}<...D_{S-R-1}<(D_{S-R}=D_{S-R+1}=...=D_{S})$ that means that
there are $\left( S-R-1\right) $ relations of cointegrations between the $S$
seasons. If the model $(2.6)$ is stationary, it has an infinite moving
average representation given by%
\begin{align}
\underline{\mathbf{Z}}_{\tau }& =\Phi \left( L\right) ^{-1}\nabla _{S}^{-%
\underline{\mathbf{D}}}(L)\underline{\mathbf{u}}_{\tau }  \notag \\
& =\left( \underset{j=0}{\overset{\infty }{\tsum }}\mathbf{\Pi }%
_{j}L^{j}\right) \left( \underset{j=0}{\overset{\infty }{\tsum }}\mathbf{%
\Psi }_{j}L^{j}\right) \underline{\mathbf{u}}_{\tau }  \notag \\
& =\underset{j=0}{\overset{\infty }{\tsum }}\left( \underset{k=0}{\overset{j}%
{\tsum }}\mathbf{\Pi }_{k}\mathbf{\Psi }_{j-k}\right) \underline{\mathbf{u}}%
_{\tau }  \notag \\
& =\underset{j=0}{\overset{\infty }{\tsum }}\mathbf{H}_{j}\underline{\mathbf{%
u}}_{\tau -j},  \tag{$2.7$}
\end{align}%
where $\mathbf{H}_{j}=\underset{k=0}{\overset{j}{\tsum }}\mathbf{\Pi }_{k}%
\mathbf{\Psi }_{j-k}$. The $ith$ element of $\underline{\mathbf{Z}}_{\tau }$%
, $\mathbf{Z}_{i,\tau }$ is written as follows%
\begin{equation*}
\mathbf{Z}_{i,\tau }=\overset{S}{\underset{s=1}{\sum }}\left( \mathbf{\Phi }%
(L)^{-1}\right) _{i,s}(1-L)^{-D_{S}}\mathbf{u}_{s,\tau },
\end{equation*}%
where $\left( \mathbf{\Phi }(L)^{-1}\right) _{i,s}$ is $(i,s)$ $th$ element
of the matrix $\mathbf{\Phi }(L)^{-1}$. $\mathbf{Z}_{i,\tau }$ is written
like linear combination of $S$ independent processes, respectively,
integrated of order $D_{1},...,D_{s},...,D_{S}$; consequently $\mathbf{Z}%
_{i,\tau }$ is integrated of order $\underset{1\leq i\leq S}{max}D_{i}$
(Granger $1986$).

\section{Periodic autocovariances}

This section deals with the determination of theoretical periodic
autocovariances of periodically correlated processes defined in precedent
section.

\subsection{$PSFI(D_{t})-PAR(p)$ periodic autocovariances}

\begin{theorem}
Given the stationary S-variate process $\underline{\mathbf{Y}}_{\tau }$
defined by $(2.2)$, we have 
\begin{equation}
\Gamma _{\underline{\mathbf{Y}}_{\tau }}(h)\sim \Delta \left[ h^{D-0.5}%
\right] \mathbf{A}\Delta \left[ h^{D-0.5}\right] \text{, as }h\rightarrow
\infty  \tag{$3.1$}
\end{equation}%
where the $\left( i,k\right) $ $th$ element of $S\times S$ matrix $\mathbf{A}
$ is: 
\begin{equation*}
\frac{\Gamma \left( 1-D_{i}-D_{k}\right) }{\Gamma \left( D_{k}\right) \Gamma
\left( 1-D_{k}\right) }\mathbf{\Pi }_{i}^{\prime }\mathbf{\Omega \Pi }_{k}
\end{equation*}%
with $\mathbf{\Pi }_{i}^{\prime }$ is the $ith$ rows of the matrix $\mathbf{%
\Pi }$ and $\mathbf{\Omega }=diag(\sigma _{1}^{2},...,\sigma
_{s}^{2},...,\sigma _{S}^{2})$.

\begin{proof}
See Ching-Fan Chung (2002).
\end{proof}
\end{theorem}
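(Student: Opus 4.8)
The plan is to work directly from the infinite moving-average representation (2.3) and reduce the autocovariance to a weighted sum of products of the MA coefficient matrices, whose tail behaviour is controlled by the fractional part alone. Writing $\underline{\mathbf{Y}}_\tau=\sum_{j\ge 0}C_j\underline{\mathbf{u}}_{\tau-j}$ with $C_j=\sum_{k=0}^j\mathbf{\Psi}_k\mathbf{\Pi}_{j-k}$ and $E[\underline{\mathbf{u}}_\tau\underline{\mathbf{u}}_\tau']=\mathbf{\Omega}=\mathrm{diag}(\sigma_1^2,\dots,\sigma_S^2)$, the lag-$h$ autocovariance matrix (in the convention $\Gamma(h)=E[\underline{\mathbf{Y}}_\tau\underline{\mathbf{Y}}_{\tau+h}']$, which is the one reproducing the stated $\mathbf{A}$) is
\begin{equation*}
\Gamma_{\underline{\mathbf{Y}}_\tau}(h)=\sum_{j=0}^\infty C_j\,\mathbf{\Omega}\,C_{j+h}',\qquad
[\Gamma(h)]_{ik}=\sum_{j\ge 0}\sum_{s=1}^S (C_j)_{is}\,\sigma_s^2\,(C_{j+h})_{ks}.
\end{equation*}
Everything then rests on the large-index behaviour of $C_j$.

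The first step is the asymptotics of the coefficients. The diagonal fractional part $\mathbf{\Psi}_j=\mathrm{diag}(\psi_j^{(1)},\dots,\psi_j^{(S)})$ has entries $\psi_j^{(s)}=\Gamma(j+D_s)/[\Gamma(j+1)\Gamma(D_s)]$, and Stirling's formula gives $\psi_j^{(s)}\sim j^{D_s-1}/\Gamma(D_s)$, a regularly varying sequence of index $D_s-1\in(-1,-\tfrac12)$. The autoregressive part $\mathbf{\Pi}_j$ is absolutely summable by the stationarity hypothesis stated after (2.3), with total mass $\mathbf{\Pi}:=\sum_{j\ge0}\mathbf{\Pi}_j=\mathbf{\Phi}(1)^{-1}$. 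Since a convolution of a regularly varying sequence (index $>-1$) with a summable one is asymptotic to the regularly varying factor times the total mass of the summable factor, and since $\mathbf{\Psi}_j$ is diagonal, I get $(C_j)_{is}\sim \psi_j^{(i)}\,\mathbf{\Pi}_{is}\sim \frac{j^{D_i-1}}{\Gamma(D_i)}\,\mathbf{\Pi}_{is}$, where $\mathbf{\Pi}_{is}$ is the $(i,s)$ entry of the long-run matrix $\mathbf{\Pi}$.

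Inserting these equivalents into $[\Gamma(h)]_{ik}$ factors the scalar coefficients out of the sum over $j$:
\begin{equation*}
[\Gamma(h)]_{ik}\ \sim\ \frac{1}{\Gamma(D_i)\Gamma(D_k)}\Big(\sum_{s=1}^S\sigma_s^2\,\mathbf{\Pi}_{is}\mathbf{\Pi}_{ks}\Big)\sum_{j\ge 0}j^{D_i-1}(j+h)^{D_k-1}.
\end{equation*}
The parenthesised factor is exactly $\mathbf{\Pi}_i'\mathbf{\Omega}\mathbf{\Pi}_k$, the quadratic form in $\mathbf{A}$. For the remaining sum I compare with an integral: substituting $j=ht$ gives $\sum_j j^{D_i-1}(j+h)^{D_k-1}\sim h^{D_i+D_k-1}\int_0^\infty t^{D_i-1}(1+t)^{D_k-1}\,dt=h^{D_i+D_k-1}B(D_i,1-D_i-D_k)$, the Beta integral converging precisely because $D_i+D_k<1$, which is guaranteed by $0\le D_s<\tfrac12$. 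Using $B(D_i,1-D_i-D_k)=\Gamma(D_i)\Gamma(1-D_i-D_k)/\Gamma(1-D_k)$ and cancelling the $\Gamma(D_i)$ yields
\begin{equation*}
[\Gamma(h)]_{ik}\ \sim\ h^{D_i+D_k-1}\,\frac{\Gamma(1-D_i-D_k)}{\Gamma(D_k)\Gamma(1-D_k)}\,\mathbf{\Pi}_i'\mathbf{\Omega}\mathbf{\Pi}_k=h^{D_i+D_k-1}\mathbf{A}_{ik},
\end{equation*}
which is the $(i,k)$ entry of $\Delta[h^{D-0.5}]\,\mathbf{A}\,\Delta[h^{D-0.5}]$, giving (3.1).

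The main obstacle is justifying the two approximations uniformly rather than termwise. Replacing $C_j$ and $C_{j+h}$ by their regularly-varying equivalents inside an infinite sum, and then replacing that sum by the Beta integral, both require a dominated-convergence or uniform-integrability argument on the rescaled summand $h\,(j/h)^{D_i-1}\big((j+h)/h\big)^{D_k-1}$: one must show that the small-$j$ block (where the Abelian approximation for $C_j$ is crude) and the far tail each contribute $o(h^{D_i+D_k-1})$. This uniform control, together with the matrix bookkeeping of the FIVAR coefficients, is the technical heart carried out in Ching-Fan Chung (2002), on which the stated proof relies.
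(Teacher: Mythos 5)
Your computation is sound and it reproduces the theorem's constant exactly: the convention $\Gamma(h)=E[\underline{\mathbf{Y}}_{\tau}\underline{\mathbf{Y}}_{\tau+h}^{\prime}]$ matches the paper's $\Gamma^{(i,k)}(h)=Cov(\mathbf{Y}_{i,\tau},\mathbf{Y}_{k,\tau+h})$, the coefficient asymptotics $(C_{j})_{is}\sim j^{D_{i}-1}\mathbf{\Pi}_{is}/\Gamma(D_{i})$ are the right ones, and the Beta-integral step $B(D_{i},1-D_{i}-D_{k})=\Gamma(D_{i})\Gamma(1-D_{i}-D_{k})/\Gamma(1-D_{k})$ correctly turns $\sum_{j}j^{D_{i}-1}(j+h)^{D_{k}-1}$ into $h^{D_{i}+D_{k}-1}$ times the stated factor, recovering $\mathbf{A}_{ik}$ and hence $(3.1)$ entrywise. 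But be aware that the paper itself contains no argument at all here: its entire proof is the citation ``See Ching-Fan Chung (2002).'' What you have produced is therefore not a variant of the paper's proof but a reconstruction of the Abelian argument that underlies the cited theorem. The trade-off is clear: the paper's one-line citation inherits Chung's full rigor (including the uniform control you flag at the end), while your route is self-contained for the specific FIVAR structure $(2.2)$ and makes visible where each ingredient of $\mathbf{A}$ comes from --- the long-run matrix $\mathbf{\Pi}=\mathbf{\Phi}(1)^{-1}$ from the summable autoregressive part, the Gamma-factor from the fractional part, and the asymmetry in $i,k$ from the Beta evaluation.

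Two substantive caveats. First, your convolution lemma is stated too loosely: since $D_{i}-1<0$, absolute summability of $(\mathbf{\Pi}_{j})$ alone does \emph{not} guarantee $(C_{j})_{is}\sim\psi_{j}^{(i)}\mathbf{\Pi}_{is}$ --- a summable sequence with sparse spikes (say of size $n^{-2}$ at indices $2^{n}$) defeats the equivalence, because a single late term can dominate $j^{D_{i}-1}$. What rescues the step here is that for a stable VAR the $\mathbf{\Pi}_{j}$ decay geometrically, hence $\mathbf{\Pi}_{j}=o(j^{D_{i}-1})$, which is the condition actually needed; you should invoke the root condition below $(2.2)$ explicitly rather than mere summability. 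Second, the paper's stationarity condition allows $D_{s}=0$, and at that boundary your argument degenerates: $1/\Gamma(0)=0$ kills the termwise equivalent, and the Beta integral $\int_{0}^{\infty}t^{D_{i}-1}(1+t)^{D_{k}-1}\,dt$ diverges at the origin when $D_{i}=0$, so both approximations require $D_{i},D_{k}>0$ (as does Chung's theorem); the statement for zero-memory components has to be read separately. Your final deferral of the dominated-convergence justification to Chung is defensible --- the paper defers the entire proof --- and the small-$j$/tail splitting you sketch is indeed the right shape for that argument.
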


\begin{corollary}
Given the process $Y_{t}$ defined in $(2.1)$, we have:%
\begin{equation}
\gamma ^{(s)}(j)\sim (h+\delta )^{D_{s}+D_{s+\nu -S\ast \delta }-1}\left[ 
\frac{\Gamma (1-D_{s}-D_{s+\nu -S\ast \delta })}{\Gamma (D_{s+\nu -S\ast
\delta })\Gamma (1-D_{s+\nu -S\ast \delta })}\right] \mathbf{\Pi }%
_{s}^{\prime }\mathbf{\Omega \Pi }_{s+\nu -S\ast \delta },  \tag{$3.2$}
\end{equation}%
where $h$ and $\nu $ are integers such as $j=h\times S+\nu $, and $j>0$,
i.e. $j\equiv \nu \left[ h\right] $ with $0\leq \nu <S-1$ and $\delta $ is
defined as follows:%
\begin{equation*}
\left\{ 
\begin{array}{c}
\delta =0,\text{ \ \ \ \ \ \ \ \ \ \ \ \ \ \ \ \ if }1\leq s+\nu <S, \\ 
\delta =1\text{, \ \ \ \ \ if }S+1\leq s+\nu \leq 2S-1,%
\end{array}%
\right.
\end{equation*}%
$\mathbf{\Pi }_{s}^{\prime }$ is the $sth$ rows of the matrix $\mathbf{\Pi }$
and $\mathbf{\Omega }=diag(\sigma _{1}^{2},...,\sigma _{s}^{2},...,\sigma
_{S}^{2})$.
\end{corollary}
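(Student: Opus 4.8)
The plan is to obtain $(3.2)$ as a direct specialization of the Theorem, translating the matrix asymptotics of $\Gamma_{\underline{\mathbf{Y}}_\tau}(\cdot)$ into the scalar periodic autocovariances of $Y_t$ through the stacking $\mathbf{Y}_{s,\tau}=Y_{s+S\tau}$. Concretely, for seasons $i,k\in\{1,\dots,S\}$ and a vector lag $m$, the $(i,k)$ entry of $\Gamma_{\underline{\mathbf{Y}}_\tau}(m)$ is the scalar covariance between a season-$i$ coordinate and a season-$k$ coordinate $m$ vector-periods apart, namely $Cov(Y_{i+S\tau},Y_{k+S(\tau+m)})$. The whole proof consists in writing each $\gamma^{(s)}(j)$ as one such entry, with $Y_s$ in the reference slot, and then reading off the asymptotic formula already established for that entry.

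First I would perform the division of the lag by $S$: write $j=hS+\nu$ with $0\le\nu\le S-1$, so that, relative to the season-$s$ reference $Y_s$, the coordinate $Y_{s+j}$ occupies season $s+\nu$ shifted by $h$ full periods. The only subtlety is that $s+\nu$ may exceed $S$: if $1\le s+\nu\le S$ the coordinate stays in block $h$ and one sets $\delta=0$, whereas if $S+1\le s+\nu\le 2S-1$ the coordinate spills into the next block, so one rewrites $s+\nu=(s+\nu-S)+S$, which raises the vector lag by one and sets $\delta=1$. In both cases this identifies
\[
\gamma^{(s)}(j)=\bigl[\Gamma_{\underline{\mathbf{Y}}_\tau}(h+\delta)\bigr]_{s,\,s+\nu-S\delta},
\]
with the wrapped season index $s+\nu-S\delta\in\{1,\dots,S\}$ and the shifted vector lag $h+\delta$. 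This is exactly the origin of the arguments appearing in $(3.2)$.

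Next I would substitute the Theorem. Since $\Delta[m^{D-0.5}]$ is diagonal with $m^{D_i-0.5}$ in slot $i$, the $(i,k)$ entry of $\Delta[m^{D-0.5}]\mathbf{A}\Delta[m^{D-0.5}]$ is $m^{D_i+D_k-1}A_{i,k}$ with $A_{i,k}=\frac{\Gamma(1-D_i-D_k)}{\Gamma(D_k)\Gamma(1-D_k)}\mathbf{\Pi}_i'\mathbf{\Omega}\mathbf{\Pi}_k$. Taking $i=s$, $k=s+\nu-S\delta$ and $m=h+\delta$ turns the matrix equivalence into
\[
\gamma^{(s)}(j)\sim (h+\delta)^{D_s+D_{s+\nu-S\delta}-1}\,\frac{\Gamma(1-D_s-D_{s+\nu-S\delta})}{\Gamma(D_{s+\nu-S\delta})\Gamma(1-D_{s+\nu-S\delta})}\,\mathbf{\Pi}_s'\mathbf{\Omega}\mathbf{\Pi}_{s+\nu-S\delta},
\]
which is $(3.2)$. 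The passage $j\to\infty$ forces $h\to\infty$ while $\nu$ and $\delta$ stay bounded, so the vector lag $h+\delta\to\infty$ and the Theorem's $h\to\infty$ regime is precisely the one needed.

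The main obstacle is not analytic but combinatorial: it is the careful index bookkeeping in the reduction step, that is, correctly detecting when the lagged coordinate crosses into the next $S$-block (the role of $\delta$), producing the wrapped season $s+\nu-S\delta$, and placing the ordered pair $(s,\,s+\nu-S\delta)$ in the correct slots of the non-symmetric matrix $\Gamma_{\underline{\mathbf{Y}}_\tau}(h+\delta)$ so that the $\mathbf{\Pi}$-factors and the Gamma-quotient in $(3.2)$ emerge in the stated order; the alternative slotting would yield the transpose entry, related by $\Gamma_{\underline{\mathbf{Y}}_\tau}(-m)=\Gamma_{\underline{\mathbf{Y}}_\tau}(m)'$. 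Once this correspondence is pinned down, no further estimation is required, and the corollary is a pure re-indexing of the Theorem.
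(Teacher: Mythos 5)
Your proposal is correct and follows essentially the same route as the paper's own proof: express $\gamma^{(s)}(j)$ as an entry of the stacked covariance matrix, perform the division $j=hS+\nu$, use $\delta$ to handle the spill-over into the next $S$-block (which shifts the vector lag to $h+\delta$ and wraps the season index to $s+\nu-S\delta$), and then read off the entrywise asymptotics from the Theorem. If anything, your bookkeeping is slightly cleaner, since you explicitly cover the boundary case $s+\nu=S$ (which falls through the paper's case split) and note that $j\to\infty$ forces $h+\delta\to\infty$, a point the paper leaves implicit.
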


\begin{proof}
The proof of the corollary, rises directly from theorem $1$. From theorem $1$%
, we have :%
\begin{equation}
\Gamma _{\underline{\mathbf{Y}}_{\tau }}^{(i,k)}(h)\sim h^{(D_{i}+D_{k}-1)}%
\frac{\Gamma (1-D_{i}-D_{k})}{\Gamma (D_{k})\Gamma (1-D_{k})}\mathbf{\Pi }%
_{i}^{\prime }\mathbf{\Omega \Pi }_{k},  \tag{$3.3$}
\end{equation}

where $\Gamma _{\underline{Y}_{\tau }}^{(i,k)}(h)=Cov(\mathbf{Y}_{i,\tau },%
\mathbf{Y}_{k,\tau +h})$ are the $(i,k)$ $th$ element of the covariance
matrix of $\Gamma _{\underline{\mathbf{Y}}_{\tau }}(h)$. Moreover, it is
known that%
\begin{align}
\gamma ^{(s)}(j)& =Cov(Y_{S\tau +s},\text{ }Y_{S\tau +s+j})  \notag \\
& =Cov(\mathbf{Y}_{s,\tau },\mathbf{Y}_{s+j,\tau }).  \tag{$3.4$}
\end{align}%
Putting $j=Sh+\nu $ with $0\leq \nu <S-1$, by replacing $j$ by $Sh+\nu $ in $%
(3.4)$, we have%
\begin{equation}
\gamma ^{(s)}(j)=Cov(\mathbf{Y}_{s,\tau },\mathbf{Y}_{s+\nu ,\tau +h}). 
\tag{$3.5$}
\end{equation}%
According to the value of $(s+\nu )$, the equality $(3.5)$, becomes%
\begin{equation*}
\gamma ^{(s)}(j)=\left\{ 
\begin{array}{c}
\Gamma _{\underline{\mathbf{Y}}_{\tau }}^{(s,s+\nu )}(h),\text{ \ \ \ \ \ \
\ \ \ \ \ \ \ \ \ \ \ \ \ \ \ \ \ \ if }1\leq s+\nu <S, \\ 
\Gamma _{\underline{\mathbf{Y}}_{\tau }}^{(s,s+\nu -S)}(h+1),\text{ \ \ if }%
(S+1)\leq s+\nu \leq 2S-1.%
\end{array}%
\right.
\end{equation*}%
By using the approximation $(3.3)$, we have:%
\begin{equation*}
\gamma ^{(s)}(j)\sim \left\{ 
\begin{array}{c}
h^{(D_{s}+D_{s+\nu }-1)}\frac{\Gamma (1-D_{s}-D_{s+\nu })}{\Gamma (D_{s+\nu
})\Gamma (1-D_{s+\nu })}\mathbf{\Pi }_{s}^{\prime }\mathbf{\Omega \Pi }%
_{s+\nu },\text{ \ \ } \\ 
\text{if }1\leq s+\nu <S, \\ 
\\ 
\left( h+1\right) ^{(D_{s}+D_{s+\nu -S\ast \delta }-1)}\frac{\Gamma
(1-D_{s}-D_{s+\nu -S\ast \nu })}{\Gamma (D_{s+\nu })\Gamma (1-D_{s+\nu
-S\ast \delta })}\mathbf{\Pi }_{s}^{\prime }\mathbf{\Omega \Pi }_{s+\nu
-S\ast \delta },\text{ \ \ } \\ 
\text{if }(S+1)\leq s+\nu \leq 2S-1,%
\end{array}%
\right.
\end{equation*}%
where $\mathbf{\Pi }_{s}^{\prime }$ is the $sth$ rows of the matrix $\mathbf{%
\Pi }$ and $\mathbf{\Omega }=diag(\sigma _{1}^{2},...,\sigma
_{s}^{2},...,\sigma _{S}^{2})$. From corollary $1$; emerges several remarks,
the most important are
\end{proof}

\begin{remark}
: The periodic autocovariances $\gamma ^{(s)}(j)$ $s=1,...,S$ taper off at
different hyperbolic rates. If we suppose that $\underset{1\leq i\leq S}{%
\min }D_{i}=D_{1}$ and $\underset{1\leq i\leq S}{\max }D_{i}=D_{S}$ (this
does not restrict the generality) than $\gamma ^{(1)}(j)$, with $j\equiv
0[S] $ has the more speedy taper off hyperbolic rate ($\propto h^{2D_{1}-1}$%
) and $\gamma ^{(S)}(j)$, with $j\equiv 0[S]$ has the lowest taper off
hyperbolic rate ($\propto h^{2D_{S}-1}$).
\end{remark}

This remark will be largely clarified graphically (see section $4$, couples
of figures $(1a,1b)$ to $(5a,5b)$. The advantage which offer by the periodic
process is the possibility of representing the graph of the autocovariances
in various manners. The autocovariances functions $\gamma ^{(s)}(j),$ $%
s=1,...,S$ can be represented in the same plot (hui ad Li $1995$), or
separately. For $j=Sh+\nu $, with $0\leq \nu <S-1$ we can also represented $%
\gamma ^{(s)}(Sh+\nu ),$ $\nu =0,...,S-1$ in the same plot. These are the
three kinds of graphs which we will use in the next section.

\subsection{$PAR(p)-PSFI(D_{t})$ periodic autocovariances}

Before stating the main result of this section, we need some further
notation. Let $D_{\max }=\underset{1\leq s\leq S}{\max }D_{s}$ and define
Before stating the main result of this section, we need some further
notation. Let $D_{\max }=\underset{1\leq s\leq S}{\max }D_{s}$ and define $%
F=\left\{ 1,...,s,...,S\right\} $, $F_{1}=\left\{ i,\text{ }i\in F\text{ / }%
D_{i}=D_{\max }\right\} $, with $\left\vert F_{1}\right\vert =R$ and $%
F_{2}=\left\{ i,\text{ }i\in F\text{ / }D_{i}<D_{\max }\right\} $, with $%
\left\vert F_{2}\right\vert =S-R$. We have $F_{1}\cap F_{2}=\varnothing $
and $F_{1}\cup F_{2}=F$.

\begin{theorem}
Given the stationary S-variate process $\underline{\mathbf{Z}}_{\tau }$%
\textit{\ defined by }$(2.6)$\textit{, we have}%
\begin{equation}
\Gamma _{\underline{\mathbf{Z}}_{\tau }}(j)\sim j^{2D_{\max }-1}\mathbf{A}%
\text{, as }j\rightarrow \infty  \tag{$3.6$}
\end{equation}%
where the $(i,k)$\textit{\ th element of }$S\times S$ matrix $\mathbf{A}$%
\textit{, i}s:%
\begin{equation*}
\mathbf{A}(l,m)=\frac{\Gamma \left( 1-2D_{\max }\right) }{\Gamma \left(
D_{\max }\right) \Gamma \left( 1-D_{\max }\right) }\sum_{i\in F_{1}}\mathbf{%
\Pi }(l,i)\mathbf{\Pi }(m,i)\sigma _{i}^{2}
\end{equation*}%
where $\mathbf{\Pi }=\left[ \mathbf{\Phi }\left( 1\right) \right] ^{-1}=%
\underset{j=0}{\overset{\infty }{\tsum }}\mathbf{\Pi }_{j}$ and $\mathbf{\Pi 
}(l,i)$ is $(l,i)$ th element of $\mathbf{\Pi }$.
\end{theorem}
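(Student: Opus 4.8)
The plan is to work directly from the infinite moving average representation $(2.7)$, namely $\underline{\mathbf{Z}}_{\tau}=\sum_{j=0}^{\infty}\mathbf{H}_{j}\underline{\mathbf{u}}_{\tau-j}$ with $\mathbf{H}_{j}=\sum_{k=0}^{j}\mathbf{\Pi}_{k}\mathbf{\Psi}_{j-k}$, where $(\mathbf{\Psi}_{j})$ are the diagonal coefficient matrices of $\nabla_{S}^{-\underline{\mathbf{D}}}(L)$, i.e. $\mathbf{\Psi}_{j}=\mathrm{diag}(\psi_{j}^{(1)},\dots,\psi_{j}^{(S)})$ with $\psi_{j}^{(s)}=\Gamma(j+D_{s})/[\Gamma(j+1)\Gamma(D_{s})]$. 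Since $\{\underline{\mathbf{u}}_{\tau}\}$ is white noise with diagonal covariance $\mathbf{\Omega}=\mathrm{diag}(\sigma_{1}^{2},\dots,\sigma_{S}^{2})$, I first record the exact autocovariance
\[
\Gamma_{\underline{\mathbf{Z}}_{\tau}}(j)=\sum_{k=0}^{\infty}\mathbf{H}_{k}\,\mathbf{\Omega}\,\mathbf{H}_{k+j}',
\]
so that, using that $\mathbf{\Omega}$ is diagonal, its $(l,m)$ entry is $\sum_{k=0}^{\infty}\sum_{s=1}^{S}(\mathbf{H}_{k})_{l,s}\,\sigma_{s}^{2}\,(\mathbf{H}_{k+j})_{m,s}$. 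The whole argument then reduces to controlling the large-index behaviour of the coefficient matrices $\mathbf{H}_{k}$.

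The key step is the asymptotic equivalence $\mathbf{H}_{k}\sim\mathbf{\Pi}\,\mathbf{\Psi}_{k}$ as $k\to\infty$, where $\mathbf{\Pi}=\sum_{k\geq0}\mathbf{\Pi}_{k}=[\mathbf{\Phi}(1)]^{-1}$. Because $(\mathbf{\Pi}_{k})$ is absolutely summable (the roots of the determinantal equation lie strictly outside the unit disk, so these coefficients decay geometrically) while the diagonal factor satisfies $\psi_{k}^{(s)}\sim k^{D_{s}-1}/\Gamma(D_{s})$ by Stirling and is regularly varying, the convolution $\mathbf{H}_{k}=\sum_{m}\mathbf{\Pi}_{m}\mathbf{\Psi}_{k-m}$ is governed by the slowly decaying factor; an Abelian argument (replacing $(k-m)^{D_{s}-1}$ by $k^{D_{s}-1}$ and summing $\mathbf{\Pi}_{m}$) gives $(\mathbf{H}_{k})_{l,s}\sim\mathbf{\Pi}(l,s)\,k^{D_{s}-1}/\Gamma(D_{s})$. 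Substituting this into the $(l,m)$ entry yields
\[
\Gamma_{\underline{\mathbf{Z}}_{\tau}}(j)(l,m)\sim\sum_{s=1}^{S}\mathbf{\Pi}(l,s)\mathbf{\Pi}(m,s)\frac{\sigma_{s}^{2}}{\Gamma(D_{s})^{2}}\sum_{k=0}^{\infty}k^{D_{s}-1}(k+j)^{D_{s}-1}.
\]

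Next I evaluate the inner sum. Comparing it with $\int_{0}^{\infty}x^{D_{s}-1}(x+j)^{D_{s}-1}\,dx$ and rescaling $x=jt$ turns it into $j^{2D_{s}-1}\int_{0}^{\infty}t^{D_{s}-1}(1+t)^{D_{s}-1}\,dt=j^{2D_{s}-1}B(D_{s},1-2D_{s})$, the Beta integral converging precisely because $0<D_{s}<0.5$. Using $B(D_{s},1-2D_{s})=\Gamma(D_{s})\Gamma(1-2D_{s})/\Gamma(1-D_{s})$ and cancelling the gamma factors, each season contributes a term of order $j^{2D_{s}-1}$ with coefficient $\mathbf{\Pi}(l,s)\mathbf{\Pi}(m,s)\sigma_{s}^{2}\,\Gamma(1-2D_{s})/[\Gamma(D_{s})\Gamma(1-D_{s})]$. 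Letting $j\to\infty$, only the slowest-decaying terms survive, namely those with $D_{s}=D_{\max}$, i.e. $s\in F_{1}$, all terms with $D_{s}<D_{\max}$ being of strictly smaller order; factoring out $j^{2D_{\max}-1}$ then delivers exactly the stated matrix $\mathbf{A}$.

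I expect the main obstacle to be the rigorous justification of the convolution asymptotics $\mathbf{H}_{k}\sim\mathbf{\Pi}\,\mathbf{\Psi}_{k}$ and, more delicately, the interchange of the $k\to\infty$ asymptotics with the infinite summation over $k$ in $\Gamma_{\underline{\mathbf{Z}}_{\tau}}(j)$. Pointwise equivalence of the summands is not enough: one needs a dominated-convergence-type bound on the tail $\sum_{k\geq K}(\mathbf{H}_{k})_{l,s}(\mathbf{H}_{k+j})_{m,s}$ that is uniform in $j$ after normalisation by $j^{2D_{s}-1}$, together with the analogous estimate for the integral comparison of the non-monotone summand $k^{D_{s}-1}(k+j)^{D_{s}-1}$. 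These are exactly the technical points developed in the long-memory literature and in the Chung $(2002)$ reference invoked for Theorem $1$, so the cleanest route is to appeal to that machinery rather than re-derive the tail bounds from scratch.
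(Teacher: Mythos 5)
Your proposal is correct, and it necessarily takes a different route from the paper, because the paper's entire proof of this theorem is the single citation ``See Ching-Fan Chung (2002)''. Where the paper outsources everything, you reconstruct the result from the moving-average representation $(2.7)$: the exact autocovariance $\Gamma_{\underline{\mathbf{Z}}_{\tau}}(j)=\sum_{k\geq 0}\mathbf{H}_{k}\mathbf{\Omega}\mathbf{H}_{k+j}^{\prime}$, the Abelian convolution asymptotics $(\mathbf{H}_{k})_{l,s}\sim \mathbf{\Pi}(l,s)\,k^{D_{s}-1}/\Gamma(D_{s})$ (valid because the $\mathbf{\Pi}_{m}$ decay geometrically while $\psi_{k}^{(s)}$ is regularly varying), the Beta-integral evaluation $\sum_{k}k^{D_{s}-1}(k+j)^{D_{s}-1}\sim j^{2D_{s}-1}\,\Gamma(D_{s})\Gamma(1-2D_{s})/\Gamma(1-D_{s})$, and the observation that only the seasons $s\in F_{1}$ survive as $j\to\infty$; the Gamma factors cancel to give exactly the stated matrix $\mathbf{A}$. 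This buys insight that the bare citation hides: it makes transparent why every entry of $\Gamma_{\underline{\mathbf{Z}}_{\tau}}(j)$ decays at the single rate $j^{2D_{\max}-1}$ (each component of $\underline{\mathbf{Z}}_{\tau}$ aggregates all $S$ fractionally integrated shocks, which is precisely the cointegration structure of the $VARFI$ form), in contrast with Theorem 1, where the rate attaches to the row index of the coefficient matrices and entry $(i,k)$ decays like $h^{D_{i}+D_{k}-1}$. What the citation buys, conversely, is exactly the two technical points you flag --- the uniform tail bounds that justify interchanging the $k\to\infty$ asymptotics of $\mathbf{H}_{k}$ with the infinite sum over $k$ --- and since you also defer those to Chung's machinery, your argument is a correct and more informative skeleton of the proof the paper omits. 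Two minor remarks: the summand $k^{D_{s}-1}(k+j)^{D_{s}-1}$ is in fact decreasing in $k$ (both exponents are negative), so the sum--integral comparison is easier than you suggest; and wherever $\mathbf{\Pi}(l,s)=0$ or $\mathbf{A}(l,m)=0$ the asymptotic equivalences must be read as $o(\cdot)$ statements, a caveat shared by the theorem as stated (as is the degenerate case $D_{\max}=0$, where the decay is geometric rather than hyperbolic).
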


\begin{proof}
See Ching-Fan Chung $(2002)$.
\end{proof}

The corollary below, gives the approximated expression, as $j\rightarrow
\infty $, of the periodic autocovariances function, $\gamma
^{(s)}(j)=cov(Z_{S\tau +s},$ $Z_{S\tau +s+j})$ of the process $Z_{t}$,
defined in $(2.5)$.

\begin{corollary}
Given the process $Z_{t}$ defined in $(2.5)$, we have,%
\begin{equation}
\gamma ^{(s)}(j)\sim (h+\delta )^{2D_{\max }-1}\left[ \frac{\Gamma
(1-2D_{\max })}{\Gamma (D_{\max })\Gamma (1-D_{\max })}\right] \sum_{i\in
F_{1}}\mathbf{\Pi }(s,i)\mathbf{\Pi }(s+\nu -S\delta ,i)\sigma _{i}^{2} 
\tag{$3.7$}
\end{equation}%
where $h$ and $\nu $ are integers such as $j=hS+\nu $, and $j>0$, i.e. $%
j\equiv \nu \left[ h\right] $ with $0\leq \nu <S-1$, and $\delta $ is
defined as follows:%
\begin{equation*}
\left\{ 
\begin{array}{c}
\delta =0,\text{ \ \ \ \ \ \ \ \ \ \ \ \ \ \ \ \ if }1\leq s+\nu <S \\ 
\delta =1\text{, \ \ \ \ \ if }S+1\leq s+\nu \leq 2S-1%
\end{array}%
\right.
\end{equation*}%
and $\mathbf{\Pi }\left( i,s\right) $ is the $(i,s)$ th element of the
matrix $\mathbf{\Pi }=\left[ \mathbf{\Phi }\left( 1\right) \right] ^{-1}=%
\underset{j=0}{\overset{\infty }{\tsum }}\mathbf{\Pi }_{j}.$
\end{corollary}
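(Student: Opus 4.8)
The plan is to obtain Corollary 2 as a direct bookkeeping consequence of Theorem 2, exactly mirroring the passage from Theorem 1 to Corollary 1: I translate a lag of the univariate process $Z_t$ into a lag of the vectorized process $\underline{\mathbf{Z}}_{\tau}$ and then read off the corresponding entry of the matrix asymptotics $(3.6)$. First I would record Theorem 2 at the level of a single entry. Writing $\Gamma_{\underline{\mathbf{Z}}_{\tau}}^{(l,m)}(h)=\mathrm{Cov}\!\left(\mathbf{Z}_{l,\tau},\mathbf{Z}_{m,\tau+h}\right)$ for the $(l,m)$th component of the matrix autocovariance at vector lag $h$, the statement $(3.6)$ together with the expression for $\mathbf{A}(l,m)$ gives
\[
\Gamma_{\underline{\mathbf{Z}}_{\tau}}^{(l,m)}(h)\sim h^{\,2D_{\max}-1}\,\frac{\Gamma\left(1-2D_{\max}\right)}{\Gamma\left(D_{\max}\right)\Gamma\left(1-D_{\max}\right)}\sum_{i\in F_{1}}\mathbf{\Pi}(l,i)\,\mathbf{\Pi}(m,i)\,\sigma_{i}^{2},\qquad h\to\infty .
\]
This is the only analytic input; its proof is the one cited to Ching-Fan Chung $(2002)$, so nothing further is needed at the level of the corollary beyond careful indexing.

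Next I would connect the univariate autocovariance to these entries. Using periodic correlation and $\mathbf{Z}_{s,\tau}=Z_{s+S\tau}$, I have $\gamma^{(s)}(j)=\mathrm{Cov}\!\left(Z_{S\tau+s},Z_{S\tau+s+j}\right)=\mathrm{Cov}\!\left(\mathbf{Z}_{s,\tau},\mathbf{Z}_{s+j,\tau}\right)$. Writing $j=hS+\nu$ with $0\le\nu<S-1$, the time index of the second argument becomes $S\tau+s+j=S(\tau+h)+(s+\nu)$, so the whole point is to put $s+\nu$ back into the canonical season range $\{1,\dots,S\}$. This forces the case split on $s+\nu$:
\[
\gamma^{(s)}(j)=\begin{cases}\Gamma_{\underline{\mathbf{Z}}_{\tau}}^{(s,\,s+\nu)}(h), & 1\le s+\nu<S,\\ \Gamma_{\underline{\mathbf{Z}}_{\tau}}^{(s,\,s+\nu-S)}(h+1), & S+1\le s+\nu\le 2S-1,\end{cases}
\]
the second line coming from the rewriting $S(\tau+h)+(s+\nu)=S(\tau+h+1)+(s+\nu-S)$, which advances the vector lag by one while reducing the column index by $S$. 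Substituting the single-entry approximation above into each branch and packaging the two branches through the indicator $\delta$ (which selects both the lag $h+\delta$ and the second index $s+\nu-S\delta$) yields precisely $(3.7)$.

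The only genuinely delicate point is this second branch, the season ``wrap-around'': one must keep the advance $h\mapsto h+1$ and the index shift $s+\nu\mapsto s+\nu-S$ consistent, and that coupling is exactly what the factor $(h+\delta)^{2D_{\max}-1}$ and the argument $s+\nu-S\delta$ inside $\mathbf{\Pi}(s+\nu-S\delta,i)$ encode. A secondary remark is that, since $(h+1)^{2D_{\max}-1}/h^{2D_{\max}-1}\to1$, both branches share the same leading hyperbolic order $h^{2D_{\max}-1}$; retaining $(h+\delta)$ rather than $h$ merely records the sharper finite-lag form asserted in the statement. Everything else is a line-by-line substitution of Theorem 2, so I expect no real obstacle beyond this indexing, and I would close the argument by noting, as in Remark~1 for the first model, that here all seasons taper off at the common rate $h^{2D_{\max}-1}$, reflecting the fractional co-integration of the $PAR(p)-PSFI(D_t)$ model.
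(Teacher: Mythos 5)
Your proposal is correct and follows essentially the same route as the paper: read Theorem~2's matrix asymptotics entrywise, identify $\gamma^{(s)}(j)=\mathrm{Cov}(\mathbf{Z}_{s,\tau},\mathbf{Z}_{s+\nu,\tau+h})$ via $j=hS+\nu$, split on whether $s+\nu$ exceeds $S$ (wrap-around advancing the vector lag to $h+1$ and shifting the index by $-S$), and substitute the entrywise approximation into each branch, encoding both branches with $\delta$. Your explicit rewriting $S(\tau+h)+(s+\nu)=S(\tau+h+1)+(s+\nu-S)$ simply makes transparent the step the paper performs implicitly.
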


\begin{proof}
The proof of the corollary, rises directly from theorem $3$. From theorem $3$%
, we have :%
\begin{equation}
\Gamma _{\underline{\mathbf{Z}}_{\tau }}^{(l,k)}(j)\sim j^{2D_{\max }-1}%
\frac{\Gamma \left( 1-2D_{\max }\right) }{\Gamma \left( D_{\max }\right)
\Gamma \left( 1-D_{\max }\right) }\sum_{i\in F_{1}}\mathbf{\Pi }(l,i)\mathbf{%
\Pi }(k,i)\sigma _{i}^{2}  \tag{$3.8$}
\end{equation}%
where $\Gamma _{\underline{\mathbf{Z}}_{\tau }}^{(l,k)}(j)=Cov(\mathbf{Z}%
_{l,\tau },\mathbf{Z}_{k,\tau +j})$ is the $(i,k)$ $th$ element of the
covariance matrix $\Gamma _{\underline{\mathbf{Z}}_{\tau }}(j)$. Moreover,
it is known that%
\begin{align}
\gamma ^{(s)}(j)& =Cov(Z_{S\tau +s},Z_{S\tau +s+j})  \notag \\
& =Cov(\mathbf{Z}_{s,\tau },\mathbf{Z}_{s+j,\tau })  \tag{$3.9$}
\end{align}%
Putting $j=Sh+\nu $ with $0\leq \nu <S-1$, by replacing $j$ by $Sh+\nu $ in $%
(3.9)$, we have%
\begin{equation}
\gamma ^{(s)}(j)=Cov(\mathbf{Z}_{s,\tau },\mathbf{Z}_{s+\nu ,\tau +h}) 
\tag{$3.10$}
\end{equation}%
According to the value of $(s+\nu )$, the equality $(3.10)$, becomes%
\begin{equation*}
\gamma ^{(s)}(j)=\left\{ 
\begin{array}{c}
\Gamma _{\underline{\mathbf{Z}}_{\tau }}^{(s,s+\nu )}(h),\text{ \ \ \ \ \ \
\ \ \ \ \ \ \ \ \ \ \ \ \ \ \ \ \ \ if }1\leq s+\nu <S \\ 
\Gamma _{\underline{\mathbf{Z}}_{\tau }}^{(s,s+\nu -S)}(h+1),\text{ \ \ if }%
(S+1)\leq s+\nu \leq 2S-1%
\end{array}%
\right.
\end{equation*}%
By using the approximation $(3.8)$, we have,%
\begin{equation*}
\gamma ^{(s)}(j)\sim \left\{ 
\begin{array}{c}
h^{(2D_{\max }-1)}\frac{\Gamma \left( 1-2D_{\max }\right) }{\Gamma \left(
D_{\max }\right) \Gamma \left( 1-D_{\max }\right) }\sum_{i\in F_{1}}\mathbf{%
\Pi }(s,i)\mathbf{\Pi }(s+\nu ,i)\sigma _{i}^{2},\text{ \ \ \ \ \ \ \ \ \ \
\ \ \ \ } \\ 
\text{if }1\leq s+\nu <S \\ 
\\ 
\\ 
(h+1)^{(2D_{\max }-1)}\frac{\Gamma \left( 1-2D_{\max }\right) }{\Gamma
\left( D_{\max }\right) \Gamma \left( 1-D_{\max }\right) }\sum_{i\in F_{1}}%
\mathbf{\Pi }(s,i)\mathbf{\Pi }(s+\nu -S,i)\sigma _{i}^{2}, \\ 
\text{if }(S+1)\leq s+\nu \leq 2S-1%
\end{array}%
\right.
\end{equation*}
\end{proof}

\begin{remark}
If $D_{1}=D_{2}=...=D_{S}$ the periodic autocovariances $\gamma ^{(s)}(j)$
of the model $(2.2)$ coincide with those of model $(2.8)$
\end{remark}

\begin{remark}
From corollary $4$; we see that the periodic autocovariances $\gamma
^{(s)}(j)$ $s=1,...,S$ taper off at the same hyperbolic rates.
\end{remark}

\section{Simulation}

In this section we compare the finite sample of the periodic autocovariances 
$\gamma ^{(s)}(j)$ $s=1,...,4$ of the models $(1.3)$, $(2.1)$ and $(2.5)$
for different value of $\underline{D}=(D_{1},D_{2},D_{3},D_{4})$. The sample
size for each model is $T=1000.$

The model we consider for the simulation study are

\begin{itemize}
\item \textbf{Model }$\mathbf{A}$%
\begin{equation*}
(1-L^{4})^{D_{t}}X_{t}=\varepsilon _{t}
\end{equation*}%
which has the following S-variate representation%
\begin{equation*}
\left( 
\begin{array}{cccc}
(1-L)^{D_{1}} & 0 & 0 & 0 \\ 
0 & (1-L)^{D_{2}} & 0 & 0 \\ 
0 & 0 & (1-L)^{D_{3}} & 0 \\ 
0 & 0 & 0 & (1-L)^{D_{4}}%
\end{array}%
\right) \underline{\mathbf{X}}_{\tau }=\underline{\mathbf{u}}_{\tau }
\end{equation*}

\item \textbf{Model }$\mathbf{B}$%
\begin{equation*}
\Phi _{t}(L)(1-L^{4})^{D_{t}}Y_{t}=u_{t}
\end{equation*}%
which has the following S-variate representation%
\begin{equation*}
\left( 
\begin{array}{cccc}
1 & 0 & 0 & -0.7 \\ 
-0.8 & 1 & 0 & 0 \\ 
0 & -0.6 & 1 & 0 \\ 
0 & 0 & -0.4 & 1%
\end{array}%
\right) \left( 
\begin{array}{cccc}
(1-L)^{D_{1}} & 0 & 0 & 0 \\ 
0 & (1-L)^{D_{2}} & 0 & 0 \\ 
0 & 0 & (1-L)^{D_{3}} & 0 \\ 
0 & 0 & 0 & (1-L)^{D_{4}}%
\end{array}%
\right) \underline{\mathbf{Y}}_{\tau }=\underline{\mathbf{u}}_{\tau }
\end{equation*}

\item \textbf{Model }$\mathbf{C}$%
\begin{equation*}
(1-L^{4})^{D_{t}}\Phi _{t}(L)Z_{t}=u_{t}
\end{equation*}%
which has the following S-variate representation%
\begin{equation*}
\left( 
\begin{array}{cccc}
(1-L)^{D_{1}} & 0 & 0 & 0 \\ 
0 & (1-L)^{D_{2}} & 0 & 0 \\ 
0 & 0 & (1-L)^{D_{3}} & 0 \\ 
0 & 0 & 0 & (1-L)^{D_{4}}%
\end{array}%
\right) \left( 
\begin{array}{cccc}
1 & 0 & 0 & -0.7 \\ 
-0.8 & 1 & 0 & 0 \\ 
0 & -0.6 & 1 & 0 \\ 
0 & 0 & -0.4 & 1%
\end{array}%
\right) \underline{\mathbf{Z}}_{\tau }=\underline{\mathbf{u}}_{\tau }
\end{equation*}%
where $\underline{\mathbf{u}}_{\tau }$ are $i.i.d$ $N(\mathbf{0},\mathbf{%
\Omega })$ with $\mathbf{\Omega }=diag(1,1,1,1)$.
\end{itemize}

\subsection{Simulated Autocovariances of model A}

In figures $1$ to $4$, we represent the empirical autocovariances function $%
\gamma ^{(s)}(j)$ $s=1,...,4$ in the same plot, for the model $A$ for
different value of $\underline{D}=(D_{1},D_{2},D_{3},D_{4})$.%
\begin{equation*}
\begin{array}{c}
\FRAME{itbpF}{2.1932in}{1.6924in}{0in}{}{}{Figure}{\special{language
"Scientific Word";type "GRAPHIC";maintain-aspect-ratio TRUE;display
"USEDEF";valid_file "T";width 2.1932in;height 1.6924in;depth
0in;original-width 10.8283in;original-height 8.3307in;cropleft "0";croptop
"1";cropright "1";cropbottom "0";tempfilename
'O3YZLR00.wmf';tempfile-properties "XPR";}} \\ 
\text{\textbf{Figure 1}: The periodic autocovariances }\widehat{\gamma }%
^{(s)}(j)\text{, }s=1,...,4\text{, for lag }j=1\text{ to }25 \\ 
\text{with }\underline{D}=(0.1,\text{ }0.2,\text{ }0.3,\text{ }0.4)\text{
for model A} \\ 
\FRAME{itbpF}{2.1932in}{1.6924in}{0in}{}{}{Figure}{\special{language
"Scientific Word";type "GRAPHIC";maintain-aspect-ratio TRUE;display
"USEDEF";valid_file "T";width 2.1932in;height 1.6924in;depth
0in;original-width 10.8283in;original-height 8.3307in;cropleft "0";croptop
"1";cropright "1";cropbottom "0";tempfilename
'O3YZLR01.wmf';tempfile-properties "XPR";}} \\ 
\text{\textbf{Figure 2}: The periodic autocovariances }\widehat{\gamma }%
^{(s)}(j)\text{, }s=1,...,4\text{, for lag }j=1\text{ to }25 \\ 
\text{with }\underline{D}=(0.1,\text{ }0.2,\text{ }0.4,\text{ }0.4)\text{
for model A} \\ 
\FRAME{itbpF}{2.1932in}{1.6924in}{0in}{}{}{Figure}{\special{language
"Scientific Word";type "GRAPHIC";maintain-aspect-ratio TRUE;display
"USEDEF";valid_file "T";width 2.1932in;height 1.6924in;depth
0in;original-width 10.8283in;original-height 8.3307in;cropleft "0";croptop
"1";cropright "1";cropbottom "0";tempfilename
'O3YZLR02.wmf';tempfile-properties "XPR";}} \\ 
\text{\textbf{Figure 3}: The periodic autocovariances }\widehat{\gamma }%
^{(s)}(j)\text{, }s=1,...,4\text{, for lag }j=1\text{ to }25 \\ 
\text{with }\underline{D}=(0.1,\text{ }0.4,\text{ }0.4,\text{ }0.4)\text{
for model A} \\ 
\FRAME{itbpF}{2.1932in}{1.6924in}{0in}{}{}{Figure}{\special{language
"Scientific Word";type "GRAPHIC";maintain-aspect-ratio TRUE;display
"USEDEF";valid_file "T";width 2.1932in;height 1.6924in;depth
0in;original-width 10.8283in;original-height 8.3307in;cropleft "0";croptop
"1";cropright "1";cropbottom "0";tempfilename
'O3YZLR03.wmf';tempfile-properties "XPR";}} \\ 
\text{\textbf{Figure 4}: The periodic autocovariances }\widehat{\gamma }%
^{(s)}(j)\text{, }s=1,...,4\text{, for lag }j=1\text{ to }25 \\ 
\text{with }\underline{D}=(0.4,\text{ }0.4,\text{ }0.4,\text{ }0.4)\text{
for model A}%
\end{array}%
\end{equation*}%
The figures $1$, illustrate well the theoretical result of theorem $1$ and
also states that the periodicity is caused by the fractional parameters $%
\underline{D}=(0.1,0.2,0.3,0.4)$ (the auto-covariances $\gamma ^{(s)}(j)$ $%
s=1,...,4$ for lag $j\equiv 0[4]$ taper off, respectively, at hyperbolic
rates, according the value of $\underline{D}$.

\subsection{Simulated autocovariances of model B}

For $\underline{D}=(0.1,0.2,0.3,0.4)$, the figures $\left( 1a\right) $ and $%
\left( 1b\right) $ represents the empirical autocovariances $\widehat{\gamma 
}^{(s)}(j)$ $s=1,...,4$, respectively, in spike graph and in line graph of
the model $B$. The couples of figures $(2a,2b)$ to $(5a,5b)$ represents the
empirical autocovariances $\widehat{\gamma }^{(1)}(4h+\nu ),$ $\nu =0,...,3$
to $\widehat{\gamma }^{(4)}(4h+\nu ),$ $\nu =0,...,3$, for $h=1$ to $25,$
respectively, in spike graph and line graph, for the model $B$.

\begin{eqnarray*}
&&%
\begin{array}{cc}
\FRAME{itbpF}{2.1534in}{1.6942in}{0in}{}{}{Figure}{\special{language
"Scientific Word";type "GRAPHIC";maintain-aspect-ratio TRUE;display
"USEDEF";valid_file "T";width 2.1534in;height 1.6942in;depth
0in;original-width 10.6285in;original-height 8.3307in;cropleft "0";croptop
"1";cropright "1";cropbottom "0";tempfilename
'O3YZLR04.wmf';tempfile-properties "XPR";}} & \FRAME{itbpF}{2.1534in}{%
1.6942in}{0in}{}{}{Figure}{\special{language "Scientific Word";type
"GRAPHIC";maintain-aspect-ratio TRUE;display "USEDEF";valid_file "T";width
2.1534in;height 1.6942in;depth 0in;original-width 10.6285in;original-height
8.3307in;cropleft "0";croptop "1";cropright "1";cropbottom "0";tempfilename
'O3YZLR05.wmf';tempfile-properties "XPR";}} \\ 
\text{Figure }1a & \text{Figure }1b%
\end{array}
\\
&&%
\begin{array}{c}
\text{The periodic autocovariances }\widehat{\gamma }^{(s)}(j)\text{, }%
s=1,...,4\text{, for lag }j=1\text{ to }100\text{,} \\ 
\text{of model B, with }\underline{D}=(0.1,\text{ }0.2,\text{ }0.3,\text{ }%
0.4),\text{ taper off at different hyperbolic rates }%
\end{array}%
\end{eqnarray*}

\begin{eqnarray*}
&&%
\begin{array}{cc}
\begin{array}{c}
\FRAME{itbpF}{2.1932in}{1.6933in}{0in}{}{}{Figure}{\special{language
"Scientific Word";type "GRAPHIC";maintain-aspect-ratio TRUE;display
"USEDEF";valid_file "T";width 2.1932in;height 1.6933in;depth
0in;original-width 10.8283in;original-height 8.3307in;cropleft "0";croptop
"1";cropright "1";cropbottom "0";tempfilename
'O3YZLR06.wmf';tempfile-properties "XPR";}} \\ 
\text{figure }2a%
\end{array}
& 
\begin{array}{c}
\FRAME{itbpF}{2.1534in}{1.6942in}{0in}{}{}{Figure}{\special{language
"Scientific Word";type "GRAPHIC";maintain-aspect-ratio TRUE;display
"USEDEF";valid_file "T";width 2.1534in;height 1.6942in;depth
0in;original-width 10.6285in;original-height 8.3307in;cropleft "0";croptop
"1";cropright "1";cropbottom "0";tempfilename
'O3YZLR07.wmf';tempfile-properties "XPR";}} \\ 
\text{figure }2b%
\end{array}%
\end{array}
\\
&&%
\begin{array}{c}
\text{The figure }2a\text{\ and }2b\text{\ represents, respectively, the
speedy and the lowest } \\ 
\text{taper off hyperbolic rate of autocovariances of model (}B\text{)}%
\end{array}%
\end{eqnarray*}%
\begin{equation*}
\begin{array}{cc}
\FRAME{itbpF}{2.1932in}{1.6933in}{0in}{}{}{Figure}{\special{language
"Scientific Word";type "GRAPHIC";maintain-aspect-ratio TRUE;display
"USEDEF";valid_file "T";width 2.1932in;height 1.6933in;depth
0in;original-width 10.8283in;original-height 8.3307in;cropleft "0";croptop
"1";cropright "1";cropbottom "0";tempfilename
'O3YZLR08.wmf';tempfile-properties "XPR";}} & \FRAME{itbpF}{2.1932in}{%
1.6933in}{0in}{}{}{Figure}{\special{language "Scientific Word";type
"GRAPHIC";maintain-aspect-ratio TRUE;display "USEDEF";valid_file "T";width
2.1932in;height 1.6933in;depth 0in;original-width 10.8283in;original-height
8.3307in;cropleft "0";croptop "1";cropright "1";cropbottom "0";tempfilename
'O3YZLR09.wmf';tempfile-properties "XPR";}} \\ 
\text{Figure }2a & \text{Figure }2b%
\end{array}%
\end{equation*}%
\begin{equation*}
\begin{array}{c}
\text{The periodic autocovariances }\gamma ^{(1)}(4h+\nu ),\text{ }\nu
=0,...,3,\text{ for fixed }h\text{ (}h=1\text{ to }25\text{)} \\ 
\text{have tendency to increase according with the value of }D_{1}+D_{1+\nu
-4\delta }%
\end{array}%
\end{equation*}%
\begin{equation*}
\begin{array}{cc}
\begin{array}{c}
\FRAME{itbpF}{2.1534in}{1.6942in}{0in}{}{}{Figure}{\special{language
"Scientific Word";type "GRAPHIC";maintain-aspect-ratio TRUE;display
"USEDEF";valid_file "T";width 2.1534in;height 1.6942in;depth
0in;original-width 10.6285in;original-height 8.3307in;cropleft "0";croptop
"1";cropright "1";cropbottom "0";tempfilename
'O3YZLR0A.wmf';tempfile-properties "XPR";}} \\ 
\text{Figure }3a%
\end{array}
& 
\begin{array}{c}
\FRAME{itbpF}{2.1534in}{1.6942in}{0in}{}{}{Figure}{\special{language
"Scientific Word";type "GRAPHIC";maintain-aspect-ratio TRUE;display
"USEDEF";valid_file "T";width 2.1534in;height 1.6942in;depth
0in;original-width 10.6285in;original-height 8.3307in;cropleft "0";croptop
"1";cropright "1";cropbottom "0";tempfilename
'O3YZLR0B.wmf';tempfile-properties "XPR";}} \\ 
\text{Figure }3b%
\end{array}%
\end{array}%
\end{equation*}%
\begin{equation*}
\begin{array}{c}
\text{The periodic autocovariances }\widehat{\gamma }^{(2)}(4h+\nu ),\text{ }%
\nu =0,...,3,\text{ for fixed }h\text{ (}h=1\text{ to }25\text{)} \\ 
\text{have tendency to increase according with the value of }D_{2}+D_{2+\nu
-4\delta }%
\end{array}%
\end{equation*}%
\begin{equation*}
\begin{array}{cc}
\begin{array}{c}
\FRAME{itbpF}{2.1534in}{1.6942in}{0in}{}{}{Figure}{\special{language
"Scientific Word";type "GRAPHIC";maintain-aspect-ratio TRUE;display
"USEDEF";valid_file "T";width 2.1534in;height 1.6942in;depth
0in;original-width 10.6285in;original-height 8.3307in;cropleft "0";croptop
"1";cropright "1";cropbottom "0";tempfilename
'O3YZLR0C.wmf';tempfile-properties "XPR";}} \\ 
\text{Figure }4a%
\end{array}
& 
\begin{array}{c}
\FRAME{itbpF}{2.1534in}{1.6942in}{0in}{}{}{Figure}{\special{language
"Scientific Word";type "GRAPHIC";maintain-aspect-ratio TRUE;display
"USEDEF";valid_file "T";width 2.1534in;height 1.6942in;depth
0in;original-width 10.6285in;original-height 8.3307in;cropleft "0";croptop
"1";cropright "1";cropbottom "0";tempfilename
'O3YZLR0D.wmf';tempfile-properties "XPR";}} \\ 
\text{Figure }4b%
\end{array}%
\end{array}%
\end{equation*}%
\begin{equation*}
\begin{array}{c}
\text{The periodic autocovariances }\gamma ^{(3)}(4h+\nu ),\text{ }\nu
=0,...,3,\text{ for fixed }h\text{ (}h=1\text{ to }24\text{)} \\ 
\text{have tendency to increase according with the value of }D_{3}+D_{3+\nu
-4\delta }%
\end{array}%
\end{equation*}%
\begin{equation*}
\begin{array}{cc}
\begin{array}{c}
\FRAME{itbpF}{2.1534in}{1.6942in}{0in}{}{}{Figure}{\special{language
"Scientific Word";type "GRAPHIC";maintain-aspect-ratio TRUE;display
"USEDEF";valid_file "T";width 2.1534in;height 1.6942in;depth
0in;original-width 10.6285in;original-height 8.3307in;cropleft "0";croptop
"1";cropright "1";cropbottom "0";tempfilename
'O3YZLR0E.wmf';tempfile-properties "XPR";}} \\ 
\text{figure 5a}%
\end{array}
& 
\begin{array}{c}
\FRAME{itbpF}{2.1534in}{1.6942in}{0in}{}{}{Figure}{\special{language
"Scientific Word";type "GRAPHIC";maintain-aspect-ratio TRUE;display
"USEDEF";valid_file "T";width 2.1534in;height 1.6942in;depth
0in;original-width 10.6285in;original-height 8.3307in;cropleft "0";croptop
"1";cropright "1";cropbottom "0";tempfilename
'O3YZLR0F.wmf';tempfile-properties "XPR";}} \\ 
\text{figure 5b}%
\end{array}%
\end{array}%
\end{equation*}%
\begin{equation*}
\begin{array}{c}
\text{The periodic autocovariances }\widehat{\gamma }^{(4)}(4h+\nu ),\text{ }%
\nu =0,...,3,\text{ for fixed }h\text{ (}h=1\text{ to }25\text{)} \\ 
\text{have tendency to increase according with the value of }D_{4}+D_{4+\nu
-4\delta }%
\end{array}%
\end{equation*}

\subsection{Simulated autocovariances of model C}

The figures $\left( 1c\right) $ and $\left( 2c\right) $ represents the
empirical autocovariances $\widehat{\gamma }^{(s)}(j)$ $s=1,...,4$,
respectively, in spike graph and in line graph of the model $C$. The
difference between the periodic autocovariances $\gamma ^{(s)}(j)$, $%
s=1,...,4$, for lag $j=1$ to $100$, decreases at the same manner, mainly
because they taper off at the same hyperbolic rates 
\begin{equation*}
\begin{array}{cc}
\FRAME{itbpF}{2.1932in}{1.6933in}{0in}{}{}{Figure}{\special{language
"Scientific Word";type "GRAPHIC";maintain-aspect-ratio TRUE;display
"USEDEF";valid_file "T";width 2.1932in;height 1.6933in;depth
0in;original-width 10.8283in;original-height 8.3307in;cropleft "0";croptop
"1";cropright "1";cropbottom "0";tempfilename
'O3YZLR0G.wmf';tempfile-properties "XPR";}} & \FRAME{itbpF}{2.1932in}{%
1.6933in}{0in}{}{}{Figure}{\special{language "Scientific Word";type
"GRAPHIC";maintain-aspect-ratio TRUE;display "USEDEF";valid_file "T";width
2.1932in;height 1.6933in;depth 0in;original-width 10.8283in;original-height
8.3307in;cropleft "0";croptop "1";cropright "1";cropbottom "0";tempfilename
'O3YZLR0H.wmf';tempfile-properties "XPR";}} \\ 
\text{Figure }1c & \text{Figure }2c%
\end{array}%
\end{equation*}%
\begin{equation*}
\begin{array}{c}
\text{The periodic autocovariances }\widehat{\gamma }^{(s)}(j)\text{, }%
s=1,...,4\text{, for lag }j=1\text{ to }100\text{ and } \\ 
\underline{D}=(0.1,\text{ }0.2,\text{ }0.4,\text{ }0.4),\text{ taper off at
the same hyperbolic rates}%
\end{array}%
\end{equation*}

\subsection{Simulated comparison between autocovariances of model B and C}

In order to compare, both autocovariances $\widehat{\gamma }^{(s)}(j)$, $%
s=1,...,4$ for model $\left( B\right) $ and model $\left( C\right) $ we
represent them graphically in the same scale for different value of $%
\underline{D}=(D_{1},D_{2},D_{3},D_{4})$ (see below).

\begin{equation*}
\begin{array}{c}
\begin{array}{cc}
\FRAME{itbpF}{2.1534in}{1.6942in}{0in}{}{}{Figure}{\special{language
"Scientific Word";type "GRAPHIC";maintain-aspect-ratio TRUE;display
"USEDEF";valid_file "T";width 2.1534in;height 1.6942in;depth
0in;original-width 10.6285in;original-height 8.3307in;cropleft "0";croptop
"1";cropright "1";cropbottom "0";tempfilename
'O3YZLR0K.wmf';tempfile-properties "XPR";}} & \FRAME{itbpF}{2.1534in}{%
1.6942in}{0in}{}{}{Figure}{\special{language "Scientific Word";type
"GRAPHIC";maintain-aspect-ratio TRUE;display "USEDEF";valid_file "T";width
2.1534in;height 1.6942in;depth 0in;original-width 10.6285in;original-height
8.3307in;cropleft "0";croptop "1";cropright "1";cropbottom "0";tempfilename
'O3YZLR0L.wmf';tempfile-properties "XPR";}}%
\end{array}
\\ 
\text{\textbf{Figure 5}: The periodic autocovariances }\widehat{\gamma }%
^{(s)}(j)\text{, }s=1,...,4\text{, for lag }j=1\text{ to }100, \\ 
\underline{D}=(0.1,\text{ }0.2,\text{ }0.3,\text{ }0.4)\text{ for
respectively, model }\left( B\right) \text{ and model }\left( C\right)%
\end{array}%
\end{equation*}%
\begin{equation*}
\begin{array}{c}
\begin{array}{cc}
\FRAME{itbpF}{2.1932in}{1.6933in}{0in}{}{}{Figure}{\special{language
"Scientific Word";type "GRAPHIC";maintain-aspect-ratio TRUE;display
"USEDEF";valid_file "T";width 2.1932in;height 1.6933in;depth
0in;original-width 10.8283in;original-height 8.3307in;cropleft "0";croptop
"1";cropright "1";cropbottom "0";tempfilename
'O3YZLR0M.wmf';tempfile-properties "XPR";}} & \FRAME{itbpF}{2.1932in}{%
1.6933in}{0in}{}{}{Figure}{\special{language "Scientific Word";type
"GRAPHIC";maintain-aspect-ratio TRUE;display "USEDEF";valid_file "T";width
2.1932in;height 1.6933in;depth 0in;original-width 10.8283in;original-height
8.3307in;cropleft "0";croptop "1";cropright "1";cropbottom "0";tempfilename
'O3YZLR0N.wmf';tempfile-properties "XPR";}}%
\end{array}
\\ 
\begin{array}{c}
\text{\textbf{Figure 6}: The periodic autocovariances }\widehat{\gamma }%
^{(s)}(j)\text{, }s=1,...,4\text{, for lag }j=1\text{ to }100, \\ 
\underline{D}=(0.1,\text{ }0.2,\text{ }0.4,\text{ }0.4)\text{ for
respectively, model }\left( B\right) \text{ and model }\left( C\right)%
\end{array}%
\end{array}%
\end{equation*}

\begin{equation*}
\begin{array}{c}
\begin{array}{cc}
\FRAME{itbpF}{2.1534in}{1.6942in}{0in}{}{}{Figure}{\special{language
"Scientific Word";type "GRAPHIC";maintain-aspect-ratio TRUE;display
"USEDEF";valid_file "T";width 2.1534in;height 1.6942in;depth
0in;original-width 10.6285in;original-height 8.3307in;cropleft "0";croptop
"1";cropright "1";cropbottom "0";tempfilename
'O3YZLR0O.wmf';tempfile-properties "XPR";}} & \FRAME{itbpF}{2.1534in}{%
1.6942in}{0in}{}{}{Figure}{\special{language "Scientific Word";type
"GRAPHIC";maintain-aspect-ratio TRUE;display "USEDEF";valid_file "T";width
2.1534in;height 1.6942in;depth 0in;original-width 10.6285in;original-height
8.3307in;cropleft "0";croptop "1";cropright "1";cropbottom "0";tempfilename
'O3YZLR0P.wmf';tempfile-properties "XPR";}}%
\end{array}
\\ 
\text{\textbf{Figure 7}: The periodic autocovariances }\widehat{\gamma }%
^{(s)}(j)\text{, }s=1,...,4\text{, for lag }j=1\text{ to }100, \\ 
\underline{D}=(0.1,\text{ }0.4,\text{ }0.4,\text{ }0.4)\text{ for,
respectively, model }\left( B\right) \text{ and model }\left( C\right)%
\end{array}%
\end{equation*}

\begin{equation*}
\begin{array}{c}
\begin{array}{cc}
\FRAME{itbpF}{2.1932in}{1.6933in}{0in}{}{}{Figure}{\special{language
"Scientific Word";type "GRAPHIC";maintain-aspect-ratio TRUE;display
"USEDEF";valid_file "T";width 2.1932in;height 1.6933in;depth
0in;original-width 10.8283in;original-height 8.3307in;cropleft "0";croptop
"1";cropright "1";cropbottom "0";tempfilename
'O3YZLR0Q.wmf';tempfile-properties "XPR";}} & \FRAME{itbpF}{2.1932in}{%
1.6933in}{0in}{}{}{Figure}{\special{language "Scientific Word";type
"GRAPHIC";maintain-aspect-ratio TRUE;display "USEDEF";valid_file "T";width
2.1932in;height 1.6933in;depth 0in;original-width 10.8283in;original-height
8.3307in;cropleft "0";croptop "1";cropright "1";cropbottom "0";tempfilename
'O3YZLR0R.wmf';tempfile-properties "XPR";}}%
\end{array}
\\ 
\text{\textbf{Figure 8}: The periodic autocovariances }\widehat{\gamma }%
^{(s)}(j)\text{, }s=1,...,4\text{, for lag }j=1\text{ to }100, \\ 
\underline{D}=(0.4,\text{ }0.4,\text{ }0.4,\text{ }0.4)\text{ for,
respectively, model }\left( B\right) \text{ and model }\left( C\right)%
\end{array}%
\end{equation*}

In figures $5$ to $8$ we plot the autocovariance sequences $\widehat{\gamma }%
^{(s)}(j)$, $s=1,...,4$ of model B and model C in the same scale and with
identical parameters ($\Phi (L)$, $\Omega $, and $\underline{D})$. The
autocovariances sequences differ dramatically. Rebecca Sela and Clifford
Hurvich $(2008)$ presents a similar conclusion for cross-covariance
sequences of bivariate $FIVAR(1,\underline{D})$ and $VARFI(1,\underline{D})$
processes with the same parameters. They point out that the first model have
the series integrated separately (in our case the seasons are integrated
separately) and in the second there is cointegration relation between the
two series (in our case there are $3$ cointegrations relations between the
four seasons). This fact, does not explain clearly why there is such
difference between the autocovariances of model $B$ and model $C$. Further
more, the taper off hyperbolic rates of the autocovariances of model $(C)$
is equal than the lowest tapper off hyperbolic rate of the autocovariances
of model $(B)$, so why the autocovariance sequences differ dramatically? The
explanation is in explicit results of corollary $3.1$ and corollary $3.2$.
Generally, in the literature of long memory models, attention is focused on
the fractional parameters (which associate with hyperbolic tapper off of
autocovariance) rather than on autoregressive or moving average parameters
and $V(\varepsilon _{t})$ included in expression of autocovariance. In the
expression $(3.4)$, the autoregressive parameters and $V(\varepsilon _{t})$
appears in the following form: $\pi _{s}^{\prime }\Omega \pi _{s+\nu -S\ast
\delta }$ and in expression $(3.6)$ it appears in the following form: $\Pi
(s,S)\Pi (s+\nu -S\ast \delta ,S)\sigma _{S}^{2}$. From model $(B)$ and
model $(C)$, the set, of possible values, of these two quantities are
respectively:%
\begin{equation}
\left( 
\begin{array}{cccc}
2.131 & 1.8989 & 1.4628 & 1.3938 \\ 
1.8989 & 2.6744 & 1.8634 & 1.3923 \\ 
1.4628 & 1.8634 & 2.2734 & 1.2975 \\ 
1.3938 & 1.3923 & 1.2975 & 1.6743%
\end{array}%
\right)  \tag{$4.1$}
\end{equation}%
and%
\begin{equation}
\left( 
\begin{array}{cccc}
0.65398 & 0.52318 & 0.31391 & 0.93428 \\ 
0.52318 & 0.41854 & 0.25113 & 0.74742 \\ 
0.31391 & 0.25113 & 0.15068 & 0.44845 \\ 
0.93428 & 0.74742 & 0.4445 & 1.3347%
\end{array}%
\right)  \tag{$4.2$}
\end{equation}%
It seen that all, possible values, of $\pi _{s}^{\prime }\Omega \pi _{s+\nu
-S\ast \delta }$ are greater than $1$ (some are greater than $2$, see the
diagonal of matrix $(4.1)$). On the other hand, all values of $\Pi (s,S)\Pi
(s+\nu -S\ast \delta ,S)\sigma _{S}^{2}$ are lower than $1$ (except the last
value in the diagonal of matrix $(4.2)$.

\section{Conclusion}

For Seasonal-Periodic-$ARFIMA(p,0,0)(0,D,0)$ model, allowing the seasonal
fractional parameter D to be S-periodic rather than constant we have
highlighted the existence of two distinct models (see section 1, model(I)
and model (II)). For these two distinct models we have established the exact
and approximated expression of the periodic autocovariance. On the simulated
sample, for each model, the empirical periodic autocovariance are calculated
and graphically represented.

It is clear, through, theoretical and simulated results that it is not easy
to distinguish between these two models (the shape of the autocovariance for
each model is not sufficient). If we consider the general model, namely,
Seasonal-Periodic-$ARFIMA(p,d_{t},q)(P,D_{t},Q)$ the situation becomes more
complex to handle, because the number of different models we can distinguish
is more than two models. Furthermore, the non seasonal part of the general
model (i.e. $PARFIMA(p,d_{t},q)$) did not receive much attention on behalf
of the statisticians and the probabilists.

\section{\protect\LARGE Appendix A}

\underline{\textbf{Proposition}}\textbf{: }\textit{The infinite moving
average representation of the process, }$\left\{ y_{t},t\in 
\mathbb{Z}
\right\} ,$\textit{\ defined by }$(1.1)$\textit{, is given by}\newline

\noindent $y_{t}=u_{t}+\sum\limits_{j=1}^{\infty }\left(
\sum\limits_{k=1}^{j}(-1)^{k}\sum\limits_{i_{1}+i_{2}+\cdots +i_{k}=j}\pi
_{i_{1}}(-d_{t})\pi _{i_{2}}(-d_{t-i_{1}})\pi
_{i_{3}}(-d_{t-i_{1}-i_{2}})\cdots \pi _{i_{k}}\left(
-d_{t-\tsum\nolimits_{l=1}^{k-1}i_{l}}\right) \right) u_{t-j},$\newline

\textit{where }$\pi _{0}(-d_{t})=1$ and $\pi _{i_{1}}(-d_{t})=\frac{%
-d_{t}\left( -d_{t}+1\right) \cdots \left( -d_{t}+i_{1}-1\right) }{\left(
i_{1}\right) !}.$ The number terms in the sum $"\sum_{i_{1}+i_{2}+\cdots
+i_{k}=j}"$ is equal $2^{j-1}$. The number $2^{j-1}$ represent the cardinal
sets of $k$ positive integers, namely, ($i_{1},i_{2},\cdots ,i_{k}$), which
when summed together give $j$.

\begin{proof}
Putting%
\begin{equation*}
\Pi _{0}(-d_{t})=1\text{ and }\frac{\Gamma \left( j-d_{t}\right) }{\Gamma
\left( j+1\right) \Gamma \left( -d_{t}\right) }=\Pi _{j}(-d_{t}),
\end{equation*}%
we can rewrite $(1.4)$ as%
\begin{equation}
y_{t}+\sum_{j=1}^{\infty }\Pi _{j}(-d_{t})y_{t-j}=u_{t}.  \tag{$A1$}
\end{equation}
\end{proof}

More generally, we have%
\begin{equation*}
y_{t-j}+\sum_{k=1}^{\infty}\Pi_{j}(-d_{t-j})y_{t-j-k}=u_{t-j}.
\end{equation*}

Suppose that the infinite moving average representation of $(A1)$ is given by%
\begin{equation}
y_{t}=\Psi _{0}(t)u_{t}+\sum_{j=1}^{\infty }\Psi _{j}(t)u_{t-j}\text{, with }%
\Psi _{0}=1,  \tag{$A2$}
\end{equation}%
we have for the lagged variable $y_{t-j}$,%
\begin{equation}
y_{t-j}=u_{t-j}+\sum_{k=1}^{\infty }\Psi _{k}(t-j)u_{t-j-k}.  \tag{$A3$}
\end{equation}

By replacing $y_{t-j}$ by $u_{t-j}+\sum_{k=1}^{\infty }\Psi
_{k}(t-j)u_{t-j-k}$ in $\left( A1\right) $, we obtain%
\begin{equation}
Y_{t}+\sum_{j=1}^{\infty }\Pi _{j}(-d_{t})u_{t-j}+\sum_{j=1}^{\infty
}\sum_{k=1}^{\infty }\Pi _{j}(-d_{t})\Psi _{k}(t-j)u_{t-j-k}=u_{t}. 
\tag{$A4$}
\end{equation}

Putting $k^{\prime }=k+j$, $(A4)$ becomes%
\begin{equation}
Y_{t}+\sum_{j=1}^{\infty }\Pi _{j}(-d_{t})u_{t-j}+\sum_{j=1}^{\infty
}\sum_{k^{\prime }=j+1}^{\infty }\Pi _{j}(-d_{t})\Psi _{k^{\prime
}-j}(t-j)u_{t-k^{\prime }}=u_{t}.  \tag{$A5$}
\end{equation}

Let $\Phi _{j,k^{\prime }}(t-j)=\Pi _{j}(-d_{t})\Psi _{k^{\prime }-j}(t-j)$,
then we can rewrite $(A5)$ as,\newpage

\noindent ${\small Y}_{t}{\small +\Pi }_{1}{\small (-d}_{t}{\small )u}%
_{t-1}\tsum\limits_{j=2}^{\infty }{\small \Pi }_{j}{\small (-d}_{t}{\small )u%
}_{t-j}{\small +}\tsum\limits_{k^{\prime }=2}^{\infty }{\small \Phi }%
_{1,k^{\prime }}{\small (t-1)u}_{t-k^{\prime }}{\small +}\tsum\limits_{k^{%
\prime }=3}^{\infty }{\small \Phi }_{2,k^{\prime }}{\small (t-2)u}%
_{t-k^{\prime }}{\small +\cdots }$

$\ \ {\small +}\tsum\limits_{k^{\prime }=j+1}^{\infty }{\small \Phi }%
_{j,k^{\prime }}{\small (t-j)u}_{t-k^{\prime }}{\small +\cdots =u}_{t},$ \ $%
(A6)$

We can rewrite $(A6)$ as,%
\begin{align}
& Y_{t}+\Pi _{1}(-d_{t})u_{t-1}+\left( \Pi _{2}(-d_{t})+\Phi
_{1,2}(t-1)\right) u_{t-2}  \tag{$A7$} \\
& \text{ \ \ }+\left( \Pi _{3}(-d_{t})+\Phi _{1,3}(t-1)+\Phi
_{2,3}(t-2)\right) u_{t-3}  \notag \\
& \text{ \ \ }+\left( \Pi _{4}(-d_{t})+\Phi _{1,4}(t-1)+\Phi
_{2,4}(t-2)+\Phi _{3,4}(t-3)\right) u_{t-4}  \notag \\
& \text{ \ \ }\vdots  \notag \\
& \text{ \ \ }+\left( \Pi _{h}(-d_{t})+\Phi _{1,h}(t-1)+\Phi
_{2,h}(t-2)+\cdots +\Phi _{h-1,h}(t-h+1)\right) u_{t-h}  \notag \\
& \text{ \ \ }+\vdots  \notag
\end{align}

Let $\Pi _{h}(-d_{t})+\Phi _{1,h}(t-1)+\Phi _{2,h}(t-2)+\cdots +\Phi
_{h-1,h}(t-h+1)=\beta _{h}(t-h+1)$, for $h\geq 1$, then we can rewrite $(A7)$
as%
\begin{equation}
Y_{t}+\beta _{1}(t)u_{t-1}+\beta _{2}(t-1)u_{t-2}+\cdots +\beta
_{h}(t-h+1)u_{t-h}+\cdots =u_{t}.  \tag{$A8$}
\end{equation}

From $(A8)$, the infinite moving average representation of the process $%
y_{t} $ is%
\begin{equation}
Y_{t}=u_{t}+\sum_{j=1}^{\infty }\left( -\beta _{j}(t)\right) u_{t-j}. 
\tag{$A9$}
\end{equation}

By identification between $(A2$) and $(A9)$, we obtain%
\begin{equation}
\Psi _{j}(t)=-\beta _{j}(t-j+1).  \tag{$A10$}
\end{equation}

From $(A10)$, the first three coefficients, $\left( \Psi _{1}(t)\text{, }%
\Psi _{2}(t)\text{, }\Psi _{3}(t)\right) $ are:

\begin{itemize}
\item $\Psi _{1}(t)=-\beta _{1}(t),$

$\ \ \ \ \ \ \ \ =-\Pi _{1}(-d_{t}).$

\item $\Psi _{2}(t)=-\beta _{2}(t-1)$

\ \ \ \ \ \ \ \ $=-\left( \Pi _{2}(-d_{t})+\Phi _{1,2}(t-1)\right) $

\ \ \ \ \ \ \ \ $=-\Pi _{2}(-d_{t})-\Pi _{1}(-d_{t})\Psi _{1}(t-1)$

\ \ \ \ \ \ \ \ $=-\Pi _{2}(-d_{t})+\Pi _{1}(-d_{t})\Pi _{1}(-d_{t-1})$

\ \ \ \ \ \ \ \ $=\sum\limits_{k=1}^{2}(-1)^{k}\left( \sum\limits_{\overset{%
i_{1}=2}{i_{1}\neq 0}}\pi _{i_{1}}(-d_{t})+\sum\limits_{\underset{i_{1}\neq
0;\text{ }i_{2}\neq 0}{i_{1}+i_{2}=2}}\pi _{i_{1}}(-d_{t})\pi
_{i_{2}}(-d_{t-i_{1}})\right) $

with $i_{l}\neq 0$, $l\in \overline{1,2}$

\item $\Psi _{3}(t)=-\beta _{3}(t-2)$

\ \ \ \ \ \ \ \ $=-\left( \Pi _{3}(-d_{t})+\Phi _{1,3}(t-1)+\Phi
_{2,3}(t-2)\right) $

\ \ \ \ \ \ \ \ $=-\left( \Pi _{3}(-d_{t})+\Pi _{1}(-d_{t})\Psi
_{2}(t-1)+\Pi _{2}(-d_{t})\Psi _{1}(t-2)\right) $

\ \ \ \ \ \ \ \ $=-\Pi _{3}(-d_{t})+\Pi _{1}(-d_{t})\Pi _{2}(-d_{t-1})-\Pi
_{1}(-d_{t})\Pi _{1}(-d_{t-1})\Pi _{1}(-d_{t-2})+\Pi _{2}(-d_{t})\Pi
_{1}(-d_{t-2})$

\ \ \ \ \ \ \ \ $=\sum\limits_{k=1}^{3}(-1)^{k}\sum\limits_{i_{1}=3}\pi
_{i_{1}}(-d_{t})+\sum\limits_{i_{1}+i_{2}=3}\pi _{i_{1}}(-d_{t})\pi
_{i_{2}}(-d_{t-i_{1}})+\sum\limits_{i_{1}+i_{2}+i_{3}=3}\pi
_{i_{1}}(-d_{t})\pi _{i_{2}}(-d_{t-i_{1}})\pi _{i_{3}}(-d_{t-i_{1}-i_{2}}),$
\end{itemize}

with $i_{l}\neq 0$, $l\in \overline{1,3}.$More generally, we have,%
\begin{align}
\Psi _{j}(t)& =-\beta _{j}(t-j+1)  \notag \\
& =\left( \sum_{k=1}^{j}(-1)^{k}\sum_{i_{1}+i_{2}+\cdots +i_{k}=j}\pi
_{i_{1}}(-d_{t})\pi _{i_{2}}(-d_{t-i_{1}})\pi
_{i_{3}}(-d_{t-i_{1}-i_{2}})\cdots \pi _{i_{k}}\left( -d_{t-j+i_{k}}\right)
\right) ,  \tag{$A11$} \\
\text{with }i_{l}& \neq 0\text{, }l\in \overline{1,k}  \notag
\end{align}%
When $d_{t}=d=constant,$ we have $\left( \Psi _{1}(t)\text{, }\Psi _{2}(t)%
\text{, }\Psi _{3}(t)\right) =\left( d,\text{ }\frac{d(d+1)}{1},\text{ }%
\frac{d(d+1)(d+2)}{2}\right) $.

\end{document}